\newcommand{\f}{\frac}
\newcommand{\p}{\partial}
\newcommand{\bs}{\boldsymbol}
\newtheorem{theorem}{Theorem}
\newtheorem{lemma}{Lemma}
\newtheorem{definition}{Definition}
\newcommand{\wt}{\widetilde}
\newcommand\ubar[1]{\underline{#1}}
\title{Fourth order finite difference methods for the wave equation with mesh refinement interfaces}
\begin{document}
\author{Siyang Wang \thanks{Department of Mathematical Sciences, Chalmers University of Technology
    and University of Gothenburg, SE-412 96 Gothenburg,
    Sweden. \href{mailto:siyang.wang@chalmers.se}{Email: siyang.wang@chalmers.se}}\and N. Anders
  Petersson \thanks{Center for Applied Scientific Computing, Lawrence Livermore National Laboratory,
    Livermore, CA 94551, USA. \href{mailto:petersson1@llnl.gov}{Email: petersson1@llnl.gov}}}
\maketitle

\begin{abstract}
We analyze two types of summation-by-parts finite difference operators for approximating the
second derivative with variable coefficient. The first type uses ghost
points, while the second type does not use any ghost points. A previously unexplored relation
between the two types of summation-by-parts operators is investigated. By combining them we develop
a new fourth order accurate finite difference discretization with hanging nodes on the mesh refinement interface. 
We take the model problem as the two-dimensional acoustic wave equation in second order form in terms of acoustic pressure,  
and prove energy stability for the proposed method. Compared to previous approaches using ghost points, the
proposed method leads to a smaller system of linear equations that needs to be solved for the ghost
point values. Another attractive feature of the proposed method is that the explicit time step does not
need to be reduced relative to the corresponding periodic problem. Numerical experiments, both for
smoothly varying and discontinuous material properties, demonstrate that the proposed method
converges to fourth order accuracy. A detailed comparison of the accuracy and the time-step
restriction with the simultaneous-approximation-term penalty method is also presented.
\end{abstract}

\section{Introduction}
Based on the pioneering work by Kreiss and Oliger~\cite{Kreiss1972}, it is by now well known that
high order accurate ($\geq 4$) numerical methods for solving hyperbolic partial differential
equations (PDE) are more efficient than low order methods. While Taylor series expansion can easily
be used to construct high order finite difference stencils for the interior of the computational
domain, it is in general difficult to find stable boundary closures that avoid spurious growth in time of the
numerical solution. Finite difference operators that satisfy the summation-by-parts (SBP) identity,
first introduced by Kreiss and Scherer~\cite{Kreiss1974}, provide a recipe for achieving both
stability and high order accuracy.


An SBP operator is constructed such that the energy estimate of the continuous PDE can be carried
out discretely for the finite difference approximation, with summation-by-parts replacing the
integration-by-parts principle. As a consequence, a discrete energy estimate can be obtained to
ensure that the discretization is energy stable. When deriving a continuous energy estimate, the
boundary terms resulting from the integration-by-parts formula are easily controlled through the
boundary conditions. The fundamental benefit of using SBP operators is that a discrete energy
estimate can be derived in a similar way. Here, the summation-by-parts identities result in
discrete boundary terms. These terms dictate how the boundary conditions must be discretized to guarantee
energy stability for the finite difference approximation.

We consider the SBP discretization of the two-dimensional acoustic wave equation on Cartesian grids, and focus on the case when the material properties are discontinuous in a semi-infinite domain. To obtain high order accuracy, one  approach is to decompose the domain into multiple subdomains, such that the material is smooth within each
subdomain. The governing equation is then discretized by SBP operators in each subdomain, and
patched together by imposing interface conditions at the material discontinuity. For computational
efficiency, the mesh size in each subdomain should be chosen inversely proportional to the wave 
speed \cite{Hagstrom2012,Kreiss1974}, leading to mesh refinement interfaces with hanging nodes. 

We develop two approaches for imposing interface conditions in the SBP finite difference framework.
In the first approach, interface conditions are imposed strongly by using ghost points. In this
case, the SBP operators also utilize ghost points in the difference approximation. We call this the
SBP-GP method.  In the second approach, the SBP-SAT method, interface conditions are imposed weakly
by adding penalty terms, also known as simultaneous-approximation-terms
(SAT)~\cite{Carpenter1994}. The addition of penalty terms in the SBP-SAT method bears similarities
with the discontinuous Galerkin method \cite{Hesthaven2008}. A high order accurate SBP-SAT
discretization of the acoustic wave equation in second order form was previously developed by Wang
et al.~\cite{Wang2016}. Petersson and Sj\"ogreen~\cite{Petersson2010} developed a second order
accurate SBP-GP scheme for the elastic wave equation in displacement formulation with mesh
refinement interfaces. We note that the projection method \cite{Olsson1995a, Olsson1995b} could in
principle also be used to impose interface conditions, but will not be considered here.


In this paper, we present two ways of generalizing the SBP-GP method in \cite{Petersson2010} to
fourth order accuracy. The first approach is a direct generalization of the second order accurate
technique. It imposes the interface conditions using ghost points from both sides of the mesh
refinement interface.  The second approach is based on a previously unexplored relation between SBP
operators with and without ghost points. This relation allows for an improved version of the fourth
order SBP-GP method, where only ghost points from one side of the interface are used to impose the
interface conditions. This approach reduces the computational cost of updating the solution at the
ghost points and should also simplify the generalization to three-dimensional problems.

Even though both the SBP-GP and SBP-SAT methods have been used to solve many kinds of PDEs, the
relation between them has previously not been explored. An additional contribution of this paper is to connect
the two approaches, provide insights into their similarities and differences, as well as making a comparison in
terms of their efficiency. 

The remainder of the paper is organized as follows. In Section~\ref{sec_sbp}, we introduce the SBP
methodology and present the close relation between the SBP operators with and without ghost
points. In Section~\ref{sec_bc}, we derive a discrete energy estimate for the wave equation in one
space dimension with Dirichlet or Neumann boundary conditions. Both the SBP-GP and the SBP-SAT
methods are analyzed in detail and their connections are discussed. In Section~\ref{sec_mr}, we
consider the wave equation in two space dimensions, and focus on the numerical treatment of grid
refinement interfaces with the SBP-GP and SBP-SAT methods. Numerical experiments are conducted in
Section~\ref{sec_num}, where we compare the SBP-GP and SBP-SAT methods in terms of their time-step
stability condition and solution accuracy. Our findings are summarized in Section~\ref{sec_conc}.

\section{SBP operators}\label{sec_sbp}
Consider the bounded one-dimensional domain $x\in[0,1]=:\Omega$ and the uniform grid on $\Omega$,
\[
\bs{x}=[x_1,\cdots,x_n]^T,\quad x_j = (j-1) h,\quad j = 1,2,\cdots,n,\quad h=1/(n-1).
\]
The grid points in $\bs{x}$ are either in the interior of $\Omega$, or on its boundary.  We also define
two ghost points outside of $\Omega$: $x_{0}=-h$ and $x_{n+1}=1+h$. Let the vector
$\wt{\bs{x}}=[x_0,\cdots,x_{n+1}]^T$ denote the grid with ghost points. Throughout this paper, we will
use the tilde symbol to indicate that ghost points are involved in a grid, a grid function, or in a difference operator. 

We consider a smooth function $u(x)$ in the domain $\Omega$, and define the grid function $u_j:=u(x_j)$. Let
\begin{equation}\label{grid_uv}
\bs u=[u_1,\cdots,u_n]^T \text{ and } \bs v=[v_1,\cdots,v_n]^T
\end{equation}
denote real-valued grid functions on $\bs x$, and let
\begin{equation}\label{grid_tilde_uv}
\wt{\bs u}=[u_0, \bs{u}^T, u_{n+1}]^T \text{ and } \wt{\bs v}=[v_0, \bs{v}^T, v_{n+1}]^T
\end{equation}
denote the corresponding real-valued grid functions on $\wt{\bs x}$.


We denote the standard discrete $L^2$ inner product by
\begin{equation*}
({\bs u},{\bs v})_2 = h\sum_{j=1}^n u_j v_j.
\end{equation*}
For SBP operators, we need the weighted inner product 
\begin{equation}\label{wip}
({\bs u},{\bs v})_{h} = h\sum_{j=1}^n w_j u_j v_j,\quad w_j \geq \delta > 0,
\end{equation}
where $\delta$ is a constant, $w_j=1$ in the interior of the domain and $w_j\neq 1$ at a few grid
points near each boundary.  The number of grid points with $w_j\neq 1$ is independent of $n$, but
depends on the order of accuracy of the SBP operator. Let $\|\cdot \|_h$ be the SBP norm induced
from the inner product $(\cdot,\cdot)_{h}$.  Furthermore, let the diagonal matrix $W$ have entries
$W_{jj}=hw_j>0$. Then, in matrix-vector notation, $(\bs{u},\bs{v})_h = \bs{u}^T W \bs{v}$.

The SBP methodology was introduced by Kreiss and Scherer in \cite{Kreiss1974}, where the first
derivative SBP operator $D\approx \p/\p x$ was also constructed. The operator $D$ does not use ghost points,
and satisfies the first derivative SBP identity.
\begin{definition}[First derivative SBP identity]
The difference operator $D$ is a first derivative SBP operator if it satisfies
\begin{equation}
\label{sbp1}
({\bs u}, D{\bs v})_{h} = -(D{\bs u},{\bs v})_{h}-u_1 v_1 + u_nv_n,
\end{equation}
for all grid functions $\bs{u}$ and $\bs{v}$.
\end{definition}
We note that \eqref{sbp1} is a discrete analogue of the integration-by-parts formula
\[
\int_0^1 u\f{dv}{dx} dx = -\int_0^1 \f{du}{dx}v-u(0)v(0)+u(1)v(1).
\]

Centered finite difference stencils are used on the grid points away from the boundaries, where the weights in the SBP norm are equal to one. To retain the SBP identity, special one-sided
boundary stencils must be employed at a few grid points near each boundary. Kreiss and Scherer
showed in \cite{Kreiss1974} that the order of accuracy of the boundary stencil must be lower than in
the interior stencil. With a diagonal norm and a $2p^{th}$ order accurate interior stencil, the
boundary stencil can be at most $p^{th}$ order accurate. The overall convergence rate can be between $p+1/2$ and
$2p$, depending on the equation and the numerical treatment of boundary and interface conditions \cite{Gustafsson1975,Wang2017a,Wang2017b}. 
In the following we refer to the accuracy of an SBP operator by its interior order of accuracy ($2p$). 

It is possible to construct block norm SBP operators with $2p^{th}$ order interior stencils and
$(2p-1)^{th}$ order boundary stencils. Despite their superior accuracy, the block norm SBP operators
are seldomly used in practice because of stability issues related to variable coefficients. However,
in some cases the block norm SBP operators can be stabilized using artificial dissipation~\cite{Mattsson2013}.

For second derivative SBP operators, we focus our discussion on discretizing the expression
\begin{equation}\label{eq_2nd-der-var-coeff}
\f{d}{d x} \left(\mu(x)\f{d v}{d x}(x)\right).
\end{equation}
Here, the smooth function $\mu(x)>0$ may represent a variable material property or a metric
coefficient. In the following we introduce two different types of second derivative
SBP operators that are based on a diagonal norm. The first type uses one ghost point outside each boundary, while the second type does
not use any ghost points. We proceed by explaining the close relation between these operators. To make
the presentation concise, we exemplify the relation for the case of fourth order accuracy ($2p=4)$.

\subsection{Second derivative SBP operators with ghost points}
Sj\"ogreen and Petersson~\cite{Sjogreen2012} derived a fourth order accurate SBP discretization $\wt
G(\mu)\wt{\bs v}$ for approximating \eqref{eq_2nd-der-var-coeff}.
This discretization was originally developed for solving the seismic wave equations and is
extensively used in the software package SW4~\cite{Sw4}. The formula is based on a five-point
centered difference stencil of fourth order accuracy in the interior of the domain. Special
one-sided boundary stencils of second order accuracy are used at the first six grid points
near each boundary. Note, in particular, that $\wt{G}(\mu)\wt{\bs{v}}$ uses the ghost point values of
$\wt{\bs{u}}$ to approximate \eqref{eq_2nd-der-var-coeff} on the boundary itself, as 
illustrated in Figure~\ref{G_GhostPoint}.
\begin{figure}
\centering
\includegraphics[width=.48\linewidth]{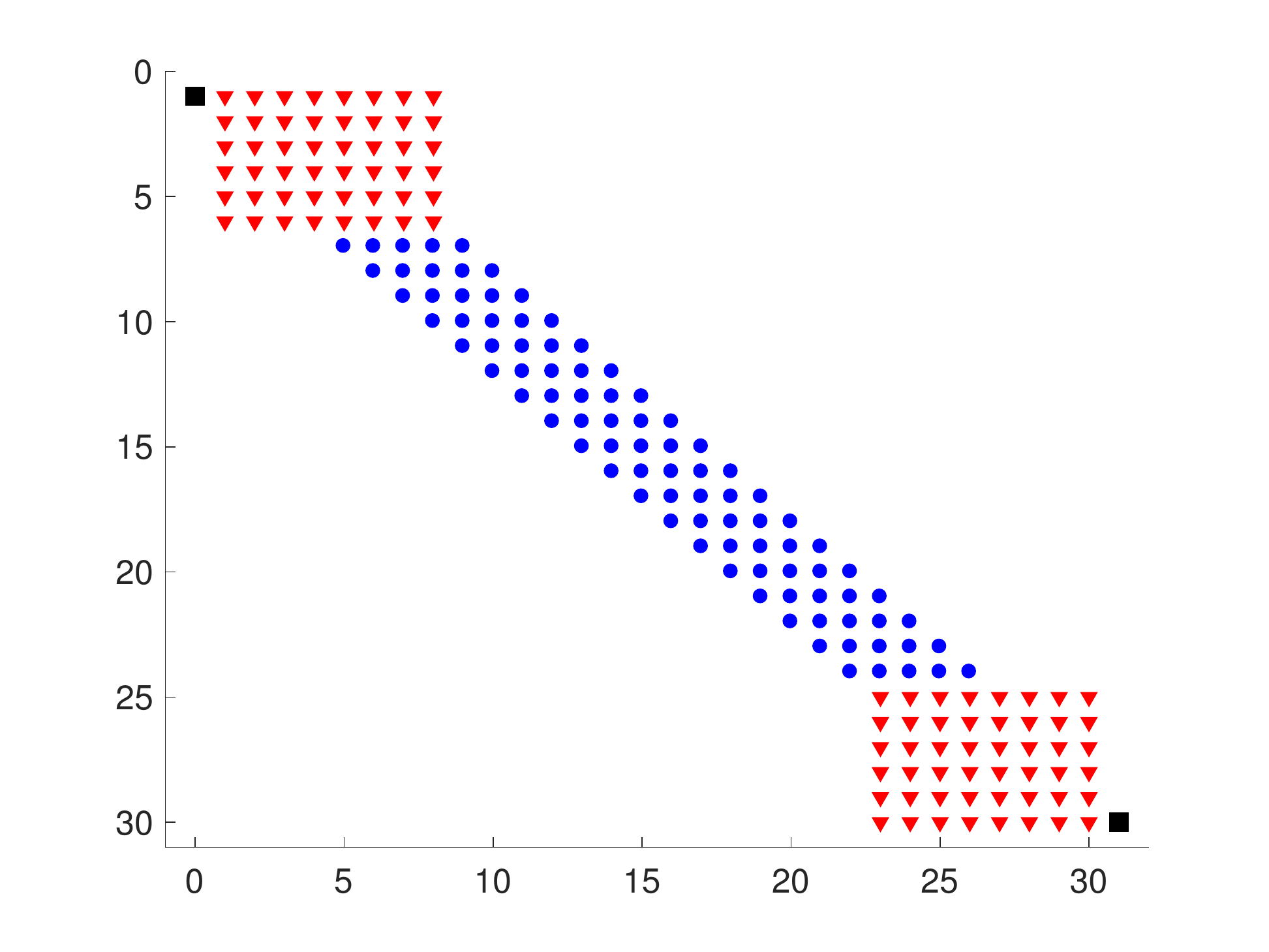}
\caption{The non-zero coefficients of the SBP operator $\wt G(\mu)$ in matrix form, for a grid with
  30 grid points. Blue circles: standard five-point difference stencil. Red triangles: special
  boundary stencil. Black squares: ghost points. The structure of $G(\mu)$ is the same, but without
  the black squares. Note that the grid function $\wt{G}(\mu) \wt{\bs v}$ is defined at the same
  grid points as $\bs{v}$. }
  \label{G_GhostPoint}
\end{figure}
As will be shown below, the difference approximation of the wave equation is energy stable because
the difference operator $\wt{G}(\mu)$ satisfies the second derivative SBP identity.
\begin{definition}[Second derivative SBP identity]\label{def_SBP2}
The difference operator $\wt{G}(\mu)$ is a second derivative SBP operator if it satisfies
\begin{equation}\label{SBP_GP}
(\bs u, \wt G(\mu) \wt{\bs v})_{h} = -S_{\mu}(\bs u,\bs v) - u_1 \mu_1 \wt{\bs{b}}_1^T \wt{\bs{v}} + u_n \mu_n \wt{\bs{b}}_n^T \wt{\bs{v}},
\end{equation}
for all grid functions $\bs{u}$ and $\wt{\bs v}$. Here, $\mu_1=\mu(x_1)$, $\mu_n=\mu(x_n)$ and the bilinear form $S_{\mu}(\cdot,\cdot)$
is symmetric and positive semi-definite. The boundary difference formulas $\wt{\bs{b}}_1^T\wt{\bs v}$ and
$\wt{\bs{b}}_n^T\wt{\bs v}$ approximate $dv/dx$ at $x_1$ and $x_n$, making use of
the ghost point values $v_0$ and $v_{n+1}$, respectively.
\end{definition}
We remark that the boundary difference operators, $\wt{\bs{b}}_1^T$ and $\wt{\bs{b}}_n^T$, are
constructed with fourth order accuracy in~\cite{Sjogreen2012}. Note that \eqref{SBP_GP} is a discrete analogue of the integration-by-parts formula
\[
\int_0^1 u \f{d}{dx} \left(\mu \f{dv}{dx}\right) dx = -\int_0^1 \mu\f{du}{dx}\f{dv}{dx}-u(0)\mu(0)\f{dv}{dx}(0)+u(1)\mu(1)\f{dv}{dx}(1).
\]





\subsection{Second derivative SBP operators without ghost points}
The second type of second derivative SBP operator, denoted by $G_{2p}(\mu)$, does not use any
ghost points. This type of operator was constructed by Mattsson~\cite{Mattsson2012} for the cases
of second, fourth and sixth order accuracy ($2p = 2,4,6$). In the following discussion we focus on the fourth order
case and define $ G(\mu) =  G_{4}(\mu)$.

In the interior of the domain, the operator $ G(\mu)$ uses the same five-point wide,
fourth order accurate stencil as the operator with ghost points, $\wt G(\mu)$. At the first six grid
points near the boundaries, the two operators are similar in that they both use a second order
accurate one-sided difference stencil that satisfies an SBP identity of the form \eqref{SBP_GP}, but
without ghost points,
\begin{equation}\label{SBP_NGP}
({\bs u}, G(\mu) {\bs v})_{h} = - S_{\mu}({\bs u}, {\bs v})_{h} - u_1 \mu_1
  {\bs{b}}_1^T {\bs{v}} + u_n \mu_n {\bs{b}}_n^T {\bs{v}}.
\end{equation} 
Similar to \eqref{SBP_GP}, the bilinear form $S_{\mu}(\cdot,\cdot)$ is symmetric and positive
semi-definite. In this case, the boundary difference operators ${\bs b}_1^T$ and ${\bs b}_n^T$ are constructed
with third order accuracy, using stencils that do not use any ghost points. The structure of $G(\mu)$
is the same as shown in Figure~\ref{G_GhostPoint}, but without the two black squares representing the
ghost points.


\subsection{The relation between SBP operators with and without ghost points}
When using the SBP operator $\wt G(\mu)$ with ghost points, boundary conditions are imposed in a
strong sense by using the ghost point values as additional degrees of freedom. On the other hand,
for the SBP operator $G(\mu)$ without ghost points, boundary conditions are imposed weakly by using
a penalty technique. Though these two types of SBP operators are used in different ways, they are
closely related to each other. In fact, an SBP operator with ghost points can easily be modified
into a new SBP operator that does not use any ghost points, and vice versa. The new operators
preserve the SBP identity and the order of accuracy of the original operators. In the following, we
demonstrate this procedure for the fourth order accurate version of $\wt G(\mu)$~\cite{Sjogreen2012}
and $G(\mu)$ \cite{Mattsson2012}. For simplicity, we only consider the stencils near the left
boundary. The stencils near the right boundary can be treated in a similar way.

To discuss accuracy, let us assume that the grid function $\wt{\bs v}$ is a restriction of a sufficiently smooth 
function $V(x)$ on the grid $\wt{\bs x}$. 
The boundary difference operator associated with $\wt{G}(\mu)$ satisfies
\begin{equation}\label{B1}
\wt{\bs b}_1^T \wt{\bs v} = \frac{1}{12h}(-3v_{0}-10v_1+18v_2-6v_3+v_4)=\f{dV}{dx}(x_1)+\mathcal{O}(h^4).
\end{equation}
Let's consider the modified boundary difference operator, 
\begin{equation}\label{B1n}
  \wt{\bs b}_1^T \wt{\bs v} + \beta h^4 \wt{\bs d}^T_{5+ }\wt{\bs v},
\end{equation}
where
\begin{equation}\label{D5}
\wt{\bs d}^T_{5+ }\wt{\bs{v}}= \frac{1}{h^5} (-v_0+5v_1-10v_2+10v_3-5v_4+v_5)=\f{d^5 V}{dx^5}(x_1)+\mathcal{O}(h)
\end{equation}
is a first order accurate approximation of the fifth derivative at the boundary point $x_1$.
Both the approximations \eqref{B1} and \eqref{B1n} are exact at $x_1$ if
$V(x)$ is a polynomial of order at most four. For any (finite) value of $\beta$, \eqref{B1n} is a
fourth order accurate approximation of $\f{dV}{dx}(x_1)$.

We note that the coefficient of $v_0$ in \eqref{B1} is -1/4. To eliminate the dependence on
$v_{0}$ in \eqref{B1n}, we choose $\beta = -1/4$ and define a new boundary difference
operator by
\[
\wt{\ubar{\bs{b}}}_1^T \bs{v}  =
\frac{1}{12h}(-25v_1+48v_2-36v_3+16v_4-3v_5)=V_x(x_1)+\mathcal{O}(h^4).
\]
This stencil does not use the ghost point value $v_0$. Instead, it uses the value
$v_5$, which is not used by $\wt{\bs{b}}_1^T\wt{\bs{v}}$. Here and throughout the paper, we use an underbar to
indicate operators that have been modified by adding/removing ghost points.

To retain the SBP identity~\eqref{SBP_GP}, the operator $\wt G(\mu)$ must be changed accordingly. We
can maintain the same bilinear form $S_{\mu}(\cdot,\cdot)$ if we only modify $\wt{G}(\mu)$ on the
boundary itself. We make the ansatz
\begin{equation}\label{eq_g-tilde-mod}
\ubar{\wt{G}}_1(\mu) \bs{v} = \wt{G}_1(\mu) \wt{\bs{v}} + \wt{\bs{a}}^T \wt{\bs{v}},
\end{equation}
where $\wt{G}_1(\mu) \wt{\bs{v}}$ should be interpreted as the first element of vector $\wt{G}(\mu) \wt{\bs{v}}$. 
To see the relation between $\ubar{\wt{G}}_1(\mu) \bs{v}$ and $\wt{G}_1(\mu) \wt{\bs{v}}$ in the SBP identity  \eqref{SBP_GP}, we pick a particular grid function $\bs{u}$ in \eqref{SBP_GP} satisfying $u_1=1$ and $u_j=0$, for
$j\geq 2$. The balance between the left and right hand sides of that equation is maintained if
\[
h w_1 \wt{\bs{a}}^T\wt{\bs{v}} = - \beta h^4 \mu_1\wt{\bs{d}}^T_{5+ }\wt{\bs{v}}\quad \Rightarrow\quad
\wt{\bs{a}}^T\wt{\bs{v}} = \f{12}{17}h^3 \mu_1\wt{\bs{d}}^T_{5+ }\wt{\bs{v}}.
\]
Here we have used that $\beta=-1/4$ and that $w_1 = 17/48$ is the weight of the SBP norm at the first
grid point. The ghost point value $v_0$ is only used by $\wt{G}(\mu)\wt{\bs v}$ on the boundary
itself. It satisfies
\begin{equation}\label{eq_sbp-gp-1}
\wt{G}_1(\mu)\wt{\bs v} = \frac{1}{h^2} \sum_{k=1}^8 \sum_{m=1}^8 \beta_{k,m}\mu_m v_k +
\frac{12}{17}\f{\mu_1}{h^2} v_0,
\end{equation}
where $\beta_{k,m}$ are constants~\cite{Sjogreen2012} (the numerical values can be found in the open
source code of SW4~\cite{Sw4}). Because the coefficient of $v_0$ in $\wt{\bs{d}}^T_{5+}\wt{\bs{v}}$ is
$-1/h^5$, the dependence on $v_0$ cancels in \eqref{eq_g-tilde-mod}. This cancellation is a consequence of  the operators $\wt{G}(\mu)$ using ghost points only from 
$\wt{\bs{b}}_1^T$ but not $S_{\mu}(\cdot,\cdot)$, see~\cite{Sjogreen2012} for details.

The new SBP difference operator that does not use ghost points can be written as
\begin{align*}
\ubar{\wt{G}}_1(\mu) \bs{v} &= \frac{1}{h^2} \sum_{k=1}^8 \sum_{m=1}^8 \beta_{k,m}\mu_m v_k +
\f{12}{17}\f{\mu_1}{h^2} \left(5v_1 - 10 v_2 + 10 v_3 - 5 v_4 + v_5\right),\\
\ubar{\wt{G}}_j(\mu) \bs{v} &= \wt{G}_j(\mu) \wt{\bs{v}},\quad j=2,3,\ldots.
\end{align*}
Note that the second equation is satisfied independently of the ghost point value, $v_0$.

To emphasize that $\wt{\ubar{G}}(\mu)$ is modified from $\wt{G}(\mu)$, we keep the tilde symbol on
$\wt{\ubar{G}}(\mu)$, even though the operator does not use any ghost points. The new operator pair
$(\ubar{\wt{G}}(\mu), \ubar{\wt{\bs{b}}}_1)$ shares important properties with the original operator
pair $(\wt{G}(\mu), {\wt{\bs{b}}}_1)$. In particular, both pairs satisfy the SBP
identity~\ref{def_SBP2} and have the same orders of accuracy in the interior and near each
boundary. Even though the SBP operator $\wt{\ubar{G}}(\mu) $ does not use any ghost points, it is
not the same as the SBP operator $G(\mu)$ constructed by Mattssson~\cite{Mattsson2012}.
The dissimilarity arises because the corresponding boundary difference operators are constructed
with different orders of accuracy.

For the SBP operator pair $(G(\mu), \bs{b}_1)$ that does {\em not} use ghost points, we can
reverse the above procedure to derive a new pair of SBP operator that uses a ghost point. The
boundary difference operator associated with ${G}(\mu)$ is
\begin{equation}\label{s3}
\bs{b}_1^T \bs{v} = \frac{1}{6h}(-11v_1+18v_2-9v_3+2v_4) = \f{dV}{dx}(x_1)+\mathcal{O}(h^3).
\end{equation}
Another third order approximation of $dV/dx(x_1)$ is given by the difference formula
\begin{equation}\label{s3_n1}
  {\bs{b}}_1^T {\bs{v}} + \gamma h^3 \wt{\bs d}_{4+}^T\wt{\bs v},
\end{equation}
where
\begin{equation}\label{d4}
\wt{\bs d}_{4+}^T\wt{\bs v} = \frac{1}{h^4} (v_0-4v_1+6v_2-4v_3+v_4) = \f{d^4 V}{dx^4}(x_1)+\mathcal{O}(h).
\end{equation}
The boundary operator \eqref{s3} is exact for any polynomial $V(x)$ of order at most three and
$\wt{\bs{d}}_{4+}^T\wt{\bs{v}}=0$ for such polynomials. Therefore, \eqref{s3_n1} is third order
accurate for any value of $\gamma$. By choosing $\gamma=-1/3$, we obtain a new boundary difference
operator that uses the ghost point value $v_0$, but does not depend on $v_4$,
\begin{equation}\label{eq_bndryder_wgp}
\ubar{\bs{b}}_1^T \wt{\bs v}  :=  \bs{b}_1^T \bs{v} - \frac{1}{3} h^3 \wt{\bs d}_{4+}^T\wt{\bs v} =
\frac{1}{6h}(-2v_0-3v_1+6v_2-v_3) = \f{dV}{dx}(x_1)+\mathcal{O}(h^3).
\end{equation} 
As a result, the new boundary difference operator has the minimum stencil width for a third order
accurate approximation of a first derivative.

To satisfy the SBP identity~\eqref{SBP_GP} for difference operators that include ghost points, we
must modify $G(\mu)$ to be compatible with the new boundary difference operator
$\ubar{\bs{b}}^T_1$. As before, we consider a grid function $\bs{u}$ with $u_1=1$ and $u_j=0$,
for $j\geq 2$. To maintain the balance between the left and right hand sides of \eqref{SBP_GP}, the
following must hold
\begin{equation}\label{eq_without_with_gp}
\ubar{G}_1(\mu) \wt{\bs{v}} := 
G_1(\mu) \bs{v}- \frac{\gamma h^3}{w_1 h} \mu_1\wt{\bs{d}}_{4+}^T\wt{\bs{v}}=
G_1(\mu) \bs{v}+\frac{16}{17} h^2\mu_1\wt{\bs{d}}_{4+}^T\wt{\bs{v}}.
\end{equation}
The new SBP operator that uses a ghost point becomes
\[
\ubar{G}_j(\mu) \wt{\bs{v}} =
\begin{cases}
G_1(\mu) \bs{v}+\frac{16}{17} h^2\mu_1\wt{\bs{d}}_{4+}^T\wt{\bs{v}},\quad &j=1,\\
G_j(\mu) \bs v, \quad &j=2,3,4,\ldots.
\end{cases}
\]
Even though the new difference operators use a ghost point, we have not added tilde symbols on
$(\ubar{G}(\mu), \ubar{\bs{b}}_1)$. This is to emphasize that they are modified from the
operators without ghost points, $(G(\mu), \bs{b}_1)$.

\section{Boundary conditions}\label{sec_bc}
To present the techniques for imposing boundary conditions with and without ghost points, and to
highlight the relation between the SBP-GP and SBP-SAT approaches, we consider the one-dimensional
wave equation,
\begin{align}
  \rho U_{tt} = (\mu(x) U_x)_x,\quad x\in [0,1],\ t\geq 0, \label{wave1d}
\end{align}
subject to smooth initial conditions.
Here, $\rho(x)>0$ and $\mu(x)>0$ are material parameters. The dependent variable $U(x,t)$ could, for
example, represent the acoustic overpressure in a linearized model of a compressible fluid. $U_{tt}$
is the second derivative with respect to time and the subscript $x$ denotes differentiation with
respect to the spatial variable.

We have for simplicity not included a forcing function in the right-hand side of
\eqref{wave1d}. This is because it has no influence on how boundary conditions are imposed. We only
consider imposing the boundary condition on the left boundary, $x=0$. Consequently, boundary terms
corresponding to the right boundary are omitted from the description below. Furthermore, the initial
conditions are assumed to be compatible with the boundary conditions.

\subsection{Neumann boundary conditions}\label{sec_Neumann}

We start by considering the Neumann boundary condition
\begin{equation}\label{eq_neumann}
  U_x(0,t)=f(t),\quad t\geq 0.
\end{equation}
In the SBP-GP method, the semi-discretization of \eqref{wave1d}-\eqref{eq_neumann} is
\begin{align}
  \bs\rho {\bs u}_{tt} &= \wt{G}(\mu)\wt{\bs u},\quad t\geq 0,\label{wave1d_gp} \\
  \wt{\bs{b}}_1^T \wt{\bs{u}}&=f(t),\quad t\geq 0,\label{wave1d_gp_bc}
\end{align}
where $\bs\rho$ is a diagonal matrix with the $j^{th}$ diagonal element $\rho_{j}=\rho(x_j)$, $\wt{\bs{u}}=\wt{\bs{u}}(t)$ is
a time-dependent grid function on $\wt{\bs x}$ and $\bs{u} = \bs{u}(t)$ is the corresponding grid function on
$\bs{x}$. By using the SBP identity \eqref{SBP_GP}, we obtain
\begin{equation*}
\begin{split}
\left(\bs{u}_t, \bs{\rho} \bs{u}_{tt}\right)_{h} &= \left(\bs{u}_t, \wt{G}(\mu) \wt{\bs{u}}\right)_{h} \\
&= -S_{\mu} (\bs u_t, \bs u) - (u_1)_t \mu_1 \wt{\bs{b}}_1^T \wt{\bs u},
\end{split}
\end{equation*}
which can be written as,
\begin{equation}\label{dedt0}
\left(\bs u_t, \bs{\rho} \bs u_{tt}\right)_{h} + S_{\mu} (\bs u_t, \bs u)  = - (u_1)_t \mu_1 \wt{\bs{b}}_1^T \wt{\bs u}.
\end{equation}

We define the discrete energy 
\begin{equation*}
E_h:=(\bs u_t, \bs{\rho} \bs u_t)_h + S_{\mu} (\bs u, \bs u),
\end{equation*}
and note that the left-hand side of equation \eqref{dedt0} equals the change rate of the discrete energy,
\begin{equation}\label{dedt1}
\f{d}{dt}E_h= - 2(u_1)_t \mu_1 \wt{\bs{b}}_1^T \wt{\bs u}.
\end{equation}
To obtain energy stability, we need to impose the Neumann boundary condition such that the right-hand
side of \eqref{dedt1} is non-positive when $f=0$. The key in the SBP-GP method is to use the ghost point as the
additional degree of freedom for imposing the boundary condition. Here, the Neumann boundary condition
\eqref{eq_neumann} is approximated by enforcing $\wt{\bs{b}}_1^T \wt{\bs{u}}(t)=f(t)$. From \eqref{B1}, it
is satisfied if
\begin{equation}\label{eq_neu_bc}
u_0 = \frac{1}{3}(-10u_1+18u_2-6u_3+u_4 -12h f(t)),\quad t\geq 0.
\end{equation}
This relation gives the ghost point value $u_0$ as function of the interior values $u_j$,
$j=1,2,3,4$. The resulting approximation is energy conservative because
\begin{equation}\label{energy_estimate_Neumann_GP}
\f{d}{dt}E_h=0,\quad f(t)=0.
\end{equation}

Next, consider the semi-discretization of \eqref{wave1d} by the SBP-SAT method in~\cite{Mattsson2012}, 
\begin{equation}\label{wave1d_sat}
\bs\rho \bs u_{tt} =  G(\mu)\bs u + \bs{p}_N,
\end{equation}
where $\bs{p}_N$ is a penalty term for enforcing the Neumann condition \eqref{eq_neumann}. By using
the SBP identity \eqref{SBP_NGP}, we obtain
\begin{equation*}
\begin{split}
(\bs u_t, \bs\rho \bs u_{tt})_h &= (\bs u_t,  G(\mu)\bs u)_{h} +(\bs u_t,\bs{p}_N)_{h} \\
&=- S_{\mu}(\bs u_t,\bs u)  - (u_1)_t \mu_1 \bs{b}_1^T \bs{u} +(\bs u_t,\bs{p}_N)_h, 
\end{split}
\end{equation*}
which can be written as
\begin{equation}\label{energy_estimate_Neumann_SAT0}
\f{d}{dt}[(\bs u_t, \bs\rho \bs u_{t})_h + S_{\mu}(\bs u,\bs u) ]= - 2 (u_1)_t \mu_1 \bs{b}_1^T \bs{u} + 2(\bs u_t, \bs{p}_N)_h. 
\end{equation}
To obtain energy conservation, the right hand side of \eqref{energy_estimate_Neumann_SAT0} must
vanish when $f(t)=0$. This property is satisfied by choosing
\begin{equation}\label{eq_neumann-pen}
  \bs{p}_N=\mu_1 h^{-1} w_1^{-1} \left(\bs{b}_1^T \bs{u} - f(t) \right)\bs{e}_1,
\end{equation}
where $\bs{e}_1=[1,0,0,\cdots]^T$. On the boundary, \eqref{wave1d_sat} can therefore be written as
\begin{multline*}
  \rho_{1} (u_1)_{tt} =  G_1(\mu)\bs{u} + \frac{\mu_1}{h w_1} \left( \bs{b}_1^T \bs{u} - f(t) \right)
  = G_1(\mu)\bs{u} + \frac{\mu_1}{h w_1}\left( \ubar{\bs{b}}_1^T\wt{\bs{u}} +
    \f{1}{3} h^3 \wt{\bs{d}}_{4+}\wt{\bs{u}}  - f(t) \right)\\
    = \ubar{G}_1(\mu)\wt{\bs{u}} + \frac{\mu_1}{h w_1} \left( \ubar{\bs{b}}_1^T\wt{\bs{u}}  - f(t) \right),
\end{multline*}
where we have used \eqref{eq_bndryder_wgp} and \eqref{eq_without_with_gp} to express the relations
between SBP operators with and without ghost points. For $j\geq 2$, the penalty term $\bs{p}_N$ is zero and
$\ubar{G}_j(\mu)\wt{\bs{u}} = G_j(\mu)\bs{u}$. Thus, we can write the SBP-SAT discretization as,
\begin{align}
  \bs{\rho} \bs{u}_{tt}  &= \ubar{G}(\mu)\wt{\bs{u}},\quad t\geq 0,\label{wave1d_sat_gp1}\\
  \ubar{\bs{b}}_1^T \wt{\bs{u}}&=f(t),\quad t\geq 0,\label{wave1d_sat_gp2}
\end{align}
which is of the same form as the SBP-GP discretization \eqref{wave1d_gp}-\eqref{wave1d_gp_bc}. Thus,
for Neumann boundary conditions, the SAT penalty method is equivalent with the SBP-GP method. An
interesting consequence is that, if both formulations are integrated in time by the same scheme, 
\eqref{wave1d_sat} and \eqref{wave1d_sat_gp1}-\eqref{wave1d_sat_gp2}
will produce identical solutions. Thus, solutions of the SBP-SAT method will satisfy the
Neumann boundary condition strongly, in the same point-wise manner as the SBP-GP method.

%
%


Since $\bs{b}_1^T \bs{u}$ is a third order approximation of $\f{du}{dx}(x_1)$, the penalty term
introduces a truncation error of $\mathcal{O}(h^2)$ at the boundary, that is, $\bs{b}_1^T \bs{u}=\f{du}{dx}(x_1)+\mathcal{O}(h^2)$. This error is of the same order
as the truncation error of the SBP operator $G(\mu)$ at the boundary. Therefore, the order of the 
largest truncation error in the discretization is not affected by the penalty term. Because of the equivalence between the
methods, the boundary approximation \eqref{B1} used by the SBP-GP method could be replaced by a
third order approximation. This modification would result in a method with the same order of
truncation error in the discretization.  

%
%

\subsection{Dirichlet boundary conditions}\label{sec_Dirichlet}

Consider the wave equation \eqref{wave1d} subject to the Dirichlet boundary condition,
\begin{equation}\label{eq_dirichlet}
  U(0,t)=g(t),\quad t\geq 0.
\end{equation}
The most obvious way of discretizing \eqref{eq_dirichlet} would be to set $u_1=g(t)$ for all
times. However, that condition is not directly applicable for the SBP-GP method because it does not involve
the ghost point value $u_0$. Instead, we can differentiate \eqref{eq_dirichlet} twice with respect to
time and use \eqref{wave1d_gp} to approximate $U_{tt}(0,t)=g_{tt}(t)$,
\begin{equation}\label{eq_semi-discrete-dir}
(u_1)_{tt} = \frac{1}{\rho_{1}}\wt{G}_1(\mu)\wt{\bs u}=g_{tt}(t),\quad t\geq 0.
\end{equation}
From \eqref{eq_sbp-gp-1}, the above condition is satisfied if the ghost point value is related to the interior values according to
\begin{equation}\label{eq_discrete_dir}
 u_0 = \frac{17}{12\mu_1} \left(h^2 \rho_{1}g_{tt}(t) -\sum_{k=1}^8 \sum_{m=1}^8 \beta_{k,m}\mu_m u_k\right),\quad t\geq 0.
 \end{equation}
This relation corresponds to \eqref{eq_neu_bc} for Neumann boundary conditions.
Because the initial conditions are compatible with the boundary condition, we can integrate
\eqref{eq_semi-discrete-dir} once in time to get $(u_1)_t=g_t(t)$. Therefore,  when $g_t=0$, the approximation is energy
conserving because the right hand side of \eqref{dedt1} vanishes and the solution satisfies \eqref{energy_estimate_Neumann_GP}.

Because we impose the Dirichlet condition through \eqref{eq_semi-discrete-dir}, we see that
\eqref{wave1d_gp} is equivalent to
\[
\left.\bs\rho {\bs u}_{tt}\right|_j =
\begin{cases}
  \rho_{1} g_{tt},&\quad j=1,\\
  \wt{G}_j(\mu)\wt{\bs u},&\quad j=2,3,\ldots.
\end{cases}
\]
Since the ghost point value is only used by $\wt{G}$ on the boundary itself, this approximation is
independent of the ghost point value and can be interpreted as injection of the Dirichlet
data, $u_1(t) = g(t)$, and the energy stability follows. Injection can also be used to impose Dirichlet data for the SBP operators without ghost
point. Here, energy stability can be proved from a different perspective by analyzing the properties
of the matrix representing the operator $G(\mu)$, see~\cite{Duru2014}.  While the injection approach
provides the most straightforward way of imposing Dirichlet data, it does not generalize to the
interface problem.

For SBP operators without ghost points, it is also possible to impose a Dirichlet boundary condition
by the SAT penalty method. In this case, the penalty term has a more complicated form than in the
Neumann case, but the technique sheds light on how to impose grid interface conditions. Replacing
the penalty term in \eqref{wave1d_sat} by $\bs{p}_D$, an analogue of the energy rate equation
\eqref{energy_estimate_Neumann_SAT0} is
\begin{equation}\label{energy_estimate_Dirichlet_SAT0}
\f{d}{dt}[(\bs u_t, \bs{\rho} \bs{u}_{t})_{h} +  S_{\mu}(\bs{u}, \bs{u})]= - 2  \mu_1 (u_1)_t \bs{b}_1^T \bs{u}  + 2(\bs u_t,\bs{p}_D)_{h}. 
\end{equation}

It is not straightforward to choose $\bs{p}_D$ such that the right-hand side of
\eqref{energy_estimate_Dirichlet_SAT0} is non-positive.  However, we can choose $\bs{p}_D$ so that
the right-hand side of \eqref{energy_estimate_Dirichlet_SAT0} becomes part of the energy. For example, if
\begin{equation}\label{p_d}
\bs{p}_D =  -\mu_1 (u_1-g(t) ) W^{-1} (\bs b_1+\f{\tau}{h}\bs e_1),
\end{equation}
where $\bs e_1=[1,0,0,\cdots]^T$ and $W$ is the diagonal SBP norm matrix. With homogeneous boundary condition $g(t)=0$, we have
\begin{equation*}
(\bs{u}_t,\bs{p}_D)_{h} = - \mu_1 u_1 \bs{b}_1^T \bs{u}_t - \f{\tau}{h} \mu_1 (u_1)_t  u_1,
\end{equation*}
and \eqref{energy_estimate_Dirichlet_SAT0} becomes
\begin{equation}\label{energy_estimate_Dirichlet_SAT1}
\f{d}{dt}\left[(\bs u_t, \bs\rho \bs u_{t})_{h} +  S_{\mu}(\bs u, \bs u) + 2\mu_1 u_{1} \bs{b}_1^T \bs{u}+\f{\tau}{h}\mu_1 u_1^2
\right]= 0. 
\end{equation}
We obtain an energy estimate if the quantity in the square bracket is non-negative.

In Lemma 2 of \cite{Virta2014}, it is proved that the following identity holds
\begin{equation}\label{borrow}
S_{\mu}(\bs u, \bs u) =  \ubar S_{\mu}(\bs u, \bs u)  + h\alpha \mu_{\min} (\bs{b}_1^T \bs{u})^2,
\end{equation}
where both the bilinear forms $S_{\mu}(\cdot,\cdot)$ and $\ubar S_{\mu}(\cdot,\cdot)$ are symmetric
and positive semi-definite, $\alpha$ is a constant that depends on the order of accuracy of $G(\mu)$
but not $h$, and
\[
\mu_{\min} = \min_{1\leq j \leq r}\mu_j.
\]
The integer constant $r$ depends on the order of accuracy of $G(\mu)$ but not on $h$. As an example,
the fourth order accurate SBP operator $G(\mu)$ constructed in \cite{Mattsson2012} satisfies
\eqref{borrow} with $r=4$ and $\alpha=0.2505765857$. Any $\alpha>0.2505765857$ can make $\ubar
S_{\mu}(\cdot,\cdot)$ indefinite. Identities corresponding to \eqref{borrow} have been used in several
other SBP related methodologies, e.g.~\cite{Appelo2007,Duru2014V,Mattsson2009}.

By using \eqref{borrow},
\begin{multline*}
  S_{\mu}(\bs u, \bs u)+2\mu_1 u_{1} \bs{b}_1^T \bs{u} + \f{\tau}{h}\mu_1 u_1^2
  = \ubar S_{\mu}(\bs u, \bs u) + h\alpha \mu_{\min} (\bs{b}_1^T \bs{u})^2+2\mu_1 u_{1} \bs{b}_1^T \bs{u}+\f{\tau}{h}\mu_1 u_1^2\\
  = \ubar S_{\mu}(\bs u, \bs u) +\left( \sqrt{h\alpha \mu_{\min}}(\bs{b}_1^T\bs{u}) +
  \frac{1}{\sqrt{h\alpha \mu_{\min}}}\mu_1 u_1\right)^2 - \frac{1}{h\alpha \mu_{\min}}\mu_1^2 u_1^2+\f{\tau}{h}\mu_1 u_1^2\\
  =\ubar S_{\mu}(\bs u, \bs u) +\left( \sqrt{h\alpha \mu_{\min}}(\bs{b}_1^T \bs{u}) + \frac{1}{\sqrt{h\alpha \mu_{\min}}}\mu_1
  u_1\right)^2+\left(\frac{\tau}{h}\mu_1-\frac{\mu_1^2}{h\alpha\mu_{\min}} \right) u_1^2.
\end{multline*}
Thus, the quantity in the square bracket of  \eqref{energy_estimate_Dirichlet_SAT1} is an energy if,
\[
\frac{\tau}{h}\mu_1-\frac{\mu_1^2}{h\alpha\mu_{\min}}\geq 0\quad \Rightarrow\quad \tau\geq \frac{\mu_1}{\alpha\mu_{\min}}.
\]
We note that the penalty parameter $\tau$ has a lower bound but no upper bound. Choosing $\tau$ to
be equal to the lower bound gives large numerical error in the solution \cite{Wang2017a}. However,
an unnecessarily large $\tau$ causes stiffness and leads to stability restrictions on the
time-step~\cite{Mattsson2009}. In computations, we find that increasing $\tau$ by 10\% to 20\% from
the lower bound is a good compromise for accuracy and efficiency.

The energy estimate \eqref{energy_estimate_Dirichlet_SAT1} contains two more terms than the
corresponding estimate for the SBP-GP method. The additional terms are approximately zero up to the order of
accuracy because of the Dirichlet boundary condition $u(x_1)=0$.

\subsection{Time discretization with the SBP-GP method}

Let $\wt{\bs{u}}^k$ denote the numerical approximation of $U(\wt{\bs{x}}, t_k)$,
where $t_k=k\delta_t$ for $k=0,1,2,\ldots$ and $\delta_t>0$ is the constant time step. We start by
discussing the update procedure for the explicit Str\"omer scheme, which is second order accurate
in time. For simplicity we only consider the boundary conditions at $x=0$. The time-stepping
procedure is described in Algorithm \ref{alg1}.
\begin{algorithm}
\caption{Second order accurate time stepping with ghost points for Neumann or Dirichlet boundary
  conditions.}\label{alg1}
\smallskip
Given initial conditions $\wt{\bs{u}}^0$ and $\wt{\bs{u}}^{-1}$ that satisfy the discretized boundary
conditions.
\begin{enumerate}
  \item Update the solution at all interior grid points,
  \begin{equation}\label{eq_wave1d_gp_full}
    {\bs{u}^{k+1}} = 2\bs{u}^k-\bs{u}^{k-1} + {\delta_t^2} \bs{\rho}^{-1}
    \wt{G}(\mu)\wt{\bs{u}}^k,\quad k=0,1,2,\ldots
  \end{equation}
\item[2a.] For Neumann boundary conditions, assign the ghost point value $u_0^{k+1}$ to satisfy
  \begin{equation}\label{eq_wave1d_gp_neu_bc}
    \wt{\bs{b}}_1^T \wt{\bs{u}}^{k+1} = f(t_{k+1}).
  \end{equation}
  \item[2b.] For Dirichlet boundary conditions, assign the ghost point value $u_0^{k+1}$ to satisfy
    \begin{equation}\label{eq_wave1d_gp_dir_bc}
      \wt{G}_1(\mu)\wt{\bs{u}}^{k+1} = \frac{\rho_{1}}{\delta_t^2} (g(t_{k+2}) - 2u_1^{k+1} + u_1^{k}).
    \end{equation}
\end{enumerate}
\end{algorithm}

For Neumann conditions, it is clear that \eqref{eq_wave1d_gp_neu_bc} enforces the semi-discrete
boundary condition \eqref{wave1d_gp_bc} at each time level. This condition must also be satisfied by
the initial data, $\wt{\bs{u}}^0$.

For Dirichlet conditions, we proceed by explaining how \eqref{eq_wave1d_gp_dir_bc} is
related to the semi-discrete boundary condition \eqref{eq_semi-discrete-dir}. Assume that the
initial data satisfies the Dirichlet boundary conditions, that is, $u^0_1 = g(t_0)$ and
$u^{-1}_1=g(t_{-1})$. Also assume that \eqref{eq_wave1d_gp_dir_bc} is satisfied for $\wt{\bs{u}}^0$,
\[
  \wt{G}_1(\mu)\wt{\bs{u}}^{0} = \frac{\rho_{1}}{\delta_t^2} (g(t_{1}) - 2u_1^{0} + u_1^{-1}) =
  \frac{\rho_{1}}{\delta_t^2} (g(t_{1}) - 2 g(t_{0}) + g(t_{-1})).
\]
The solution at time level $t_1$ is obtained from \eqref{eq_wave1d_gp_full}. In particular, on the
boundary,
\[
  {u}^{1}_1 = 2{u}^0_1 - {u}^{-1}_1 + \frac{\delta_t^2}{\rho_1} \wt{G}_1(\mu)\wt{\bs{u}}^0 = 2 g(t_0)
  - g(t_{-1}) +  \frac{\delta_t^2}{\rho_1} \frac{\rho_{1}}{\delta_t^2} \left(g(t_{1}) - 2 g(t_{0}) + g(t_{-1})\right) = g(t_1).
\]
Thus, the Dirichlet boundary condition is also satisfied at time level $t_1$. Assigning the ghost
point $u^1_0$ such that \eqref{eq_wave1d_gp_dir_bc} is satisfied for $\wt{\bs{u}}^1$ thus ensures
that $\wt{\bs{u}}^2$ will satisfy the Dirichlet boundary condition at the next time level, after
\eqref{eq_wave1d_gp_full} has been applied. By induction, the Dirichlet boundary condition will be
satisfied for any time level $t_k$. The boundary condition \eqref{eq_wave1d_gp_dir_bc} is therefore
equivalent to
\[
\wt{G}_1(\mu)\wt{\bs{u}}^{k+1} = \rho_{1}\frac{g(t_{k+2}) - 2g(t_{k+1}) + g(t_{k})}{\delta_t^2},
\]
which is a second order accurate approximation of the semi-discrete boundary condition
\eqref{eq_semi-discrete-dir}. Another interpretation of \eqref{eq_wave1d_gp_dir_bc} is that the
ghost point value for $\wt{\bs{u}}^{k+1}$ is assigned by ``looking ahead'', i.e., such that the
Dirichlet boundary condition will be satisfied for $\wt{\bs{u}}^{k+2}$.

The Str\"omer time-stepping scheme can be improved to fourth (or higher) order accuracy in time by a
modified equation approach~\cite{Gilbert2008, Sjogreen2012}. To derive the scheme, we first notice that
\begin{equation}\label{eq_fourth-in-time}
\frac{\bs{u}^{k+1} - 2 \bs{u}^k + \bs{u}^{k-1}}{\delta_t^2} = \bs{u}_{tt}(t_k) + \frac{\delta_t^2}{12} \bs{u}_{tttt}(t_k)
+ {\cal O}(\delta_t^4).
\end{equation}
By differentiating \eqref{wave1d_gp} twice in time,
\begin{equation}\label{eq_fourth-derivative}
\bs{u}_{tttt} = \bs{\rho}^{-1} \wt{G}(\mu)\wt{\bs u}_{tt}.
\end{equation}
We can obtain a second order (in time) approximation of $\wt{\bs{u}}_{tt}$ from
\begin{equation}\label{eq_acceleration}
\wt{\bs{v}}^{k} := \frac{\wt{\bs{u}}^{*,k+1} - 2 \wt{\bs{u}}^k + \wt{\bs{u}}^{k-1}}{\delta_t^2} =
\wt{\bs u}_{tt} + {\cal O}(\delta_t^2).
\end{equation}
Here, $\wt{\bs{u}}^{*,k+1}$ is the second order (in time) predictor,
\begin{equation}\label{eq_predictor}
\bs{u}^{*,k+1} = 2\bs{u}^k-\bs{u}^{k-1} + {\delta_t^2} \bs{\rho}^{-1}\wt{G}(\mu)\wt{\bs{u}}^k,
\end{equation}
augmented by appropriate boundary conditions that define the ghost point value ${u}^{*,k+1}_0$. By using
\eqref{eq_acceleration} and \eqref{eq_fourth-derivative} to approximate $\bs{u}_{tttt}$ in \eqref{eq_fourth-in-time},
we obtain
\begin{equation}\label{eq_fourth-order-complete}
\bs{u}^{k+1} = 2 \bs{u}^k - \bs{u}^{k-1} + \delta_t^2\bs{\rho}^{-1} \wt{G}(\mu)\wt{\bs u}^k + \frac{\delta_t^4}{12} \bs{\rho}^{-1} \wt{G}(\mu)\wt{\bs{v}}^{k},
\end{equation}
where $\wt{\bs{v}}^{k}$ is given by \eqref{eq_acceleration}. By subtracting \eqref{eq_predictor}
from \eqref{eq_fourth-order-complete} and re-organizing the terms, we arrive at the corrector formula,
\[
\bs{u}^{k+1} = \bs{u}^{*,k+1} + \frac{\delta_t^4}{12} \bs{\rho}^{-1} \wt{G}(\mu)\wt{\bs{v}}^{k}.
\]
The resulting fourth order predictor-corrector time-stepping procedure is described in Algorithm
\ref{alg2}.

Similar to the second order algorithm, it is straightforward to impose Neumann boundary
conditions, but the Dirichlet boundary conditions require some further explanation. The basic idea
is to enforce the same boundary condition for both the predictor and the corrector, i.e.,
\[
u^{*,k}_1 = u^{k}_1 = g(t_k),\quad k=0,1,2,\ldots.
\]
As before, the Dirichlet condition are enforced by ``looking ahead''.  We assume that the initial
data satisfies the compatibility conditions $u_1^{-1} = g(t_{-1})$,  $u_1^{0} = g(t_{0})$ and
\[
\wt{G}_1(\mu)\wt{\bs{u}}^0 = \rho_1 \frac{ g(t_1) - 2 g(t_{0}) -  g(t_{-1})}{\delta_t^2}.
\]
Similar to the second order time-stepping algorithm, the first predictor step updates the solution on the
boundary to be
\[
u^{*,1}_1 = 2{u}^0_1 - {u}^{-1}_1 + \frac{\delta_t^2}{\rho_1} \wt{G}_1(\mu)\wt{\bs{u}}^0 = 2 g(t_0)
- g(t_{-1}) +  \frac{\delta_t^2}{\rho_1} \frac{\rho_{1}}{\delta_t^2} \left(g(t_{1}) - 2 g(t_{0}) + g(t_{-1})\right) = g(t_1).
\]
Thus, the compatibility condition for the initial condition $\wt{\bs{u}}^0$ ensures that the first
predictor satisfies the Dirichlet boundary condition $u^{*,1}_1 = g(t_1)$. The boundary condition
for the predictor \eqref{eq_pred_bc} assigns the ghost point value $u^{*,1}_0$ such that
\[
\wt{G}_1(\mu)\wt{\bs{u}}^{*,1} =
2 \wt{G}_1(\mu)\wt{\bs{u}}^{0} - \wt{G}_1(\mu)\wt{\bs{u}}^{-1}\quad \Rightarrow\quad
\wt{G}_1(\mu)\wt{\bs{v}}^{0} = 0.
\]
As a result, the corrector formula \eqref{eq_corr}, evaluated at the boundary point, gives
\[
u^{1}_1 = u^{*,1}_1 + \frac{\delta_t^4}{12\rho_1} \wt{G}_1(\mu)\wt{\bs{v}}^{0} = g(t_1).
\]
This shows that both the predictor and the corrector satisfy the Dirichlet boundary condition after
the first time step. By enforcing the boundary condition \eqref{eq_corr_bc} for the corrector, we
guarantee that the next predictor satisfy the Dirichlet boundary condition after \eqref{eq_pred} has
been applied.  An induction argument shows that the Dirichlet conditions are satisfied for all
subsequent time steps.

Both the second order Str\"omer scheme and the fourth order predictor-corrector schemes are stable
under a CFL condition on the time step. Furthermore, the time-discrete solution satisfies an energy
estimate, see~\cite{Kreiss2002, Sjogreen2012} for details.

\begin{algorithm}
\caption{Fourth order accurate predictor-corrector time stepping with ghost points for Neumann or Dirichlet boundary
  conditions.}\label{alg2}
\smallskip
Given initial conditions $\wt{\bs{u}}^0$ and $\wt{\bs{u}}^{-1}$ that satisfy the discretized
boundary conditions. Compute $\wt{\bs{u}}^{*, k+1}$ and $\wt{\bs{u}}^{k+1}$ for $k=0,1,2,\dots$ according to
\begin{enumerate}
\item[1.] Compute the predictor at the interior grid points,
  \begin{equation}\label{eq_pred}
    \bs{u}^{*,k+1} = 2\bs{u}^k-\bs{u}^{k-1} + {\delta_t^2} \bs{\rho}^{-1}
    \wt{G}(\mu)\wt{\bs{u}}^k.
  \end{equation}
\item[2a.] For Neumann boundary conditions, assign the ghost point value $u_0^{*, k+1}$ to satisfy
  \begin{equation}
    \wt{\bs{b}}_1^T \wt{\bs{u}}^{*, k+1} = f(t_{k+1}).
  \end{equation}
\item[2b.] For Dirichlet boundary conditions, assign the ghost point value $u_0^{*, k+1}$ to satisfy
  \begin{equation}\label{eq_pred_bc}
    \wt{G}_1(\mu)\wt{\bs{u}}^{*,k+1} = 2 \wt{G}_1(\mu)\wt{\bs{u}}^{k} - \wt{G}_1(\mu)\wt{\bs{u}}^{k-1}.
  \end{equation}
\item[3.] Evaluate the acceleration at all grid points,
  \begin{equation}
    \wt{\bs{v}}^{k} := \frac{\wt{\bs{u}}^{*,k+1} - 2 \wt{\bs{u}}^k + \wt{\bs{u}}^{k-1}}{\delta_t^2}.
  \end{equation}
\item[4.] Compute the corrector at the interior grid points,
  \begin{equation}\label{eq_corr}
    \bs{u}^{k+1} = \bs{u}^{*,k+1} + \frac{\delta_t^4}{12} \bs{\rho}^{-1} \wt{G}(\mu)\wt{\bs{v}}^{k}.
  \end{equation}
\item[5a.] For Neumann boundary conditions, assign the ghost point value $u_0^{k+1}$ to satisfy
  \begin{equation}
    \wt{\bs{b}}_1^T \wt{\bs{u}}^{k+1} = f(t_{k+1}).
  \end{equation}
\item[5b.] For Dirichlet boundary conditions, assign the ghost point value $u_0^{k+1}$ to satisfy
  \begin{equation}\label{eq_corr_bc}
    \wt{G}_1(\mu)\wt{\bs{u}}^{k+1} = \frac{\rho_{1}}{\delta_t^2} (g(t_{k+2}) - 2u_1^{k+1} + u_1^{k}).
  \end{equation}
\end{enumerate}
\end{algorithm}


\section{Grid refinement interface}\label{sec_mr}
To obtain high order accuracy at a material discontinuity, we partition the domain into subdomains
such that the discontinuity is aligned with a subdomain boundary. The multiblock finite difference
approximation is then carried out in each subdomain where the material is smooth, and adjacent
subdomains are connected by interface conditions.


As an example, we consider the two-dimensional acoustic wave equation in a composite domain
$\Omega^f\cup\Omega^c$, where $\Omega^f=[0,1]\times [0,1]$ and $\Omega^c=[0,1]\times [-1,0]$. The
governing equation in terms of the acoustic pressure can be written as
\begin{equation}\label{wave2d}
\begin{split}
&\rho^f F_{tt} = \nabla\cdot (\mu^f\nabla F),\ (x,y)\in\Omega^f, \ t\geq 0, \\
&\rho^c C_{tt} = \nabla\cdot (\mu^c\nabla C),\ (x,y)\in\Omega^c, \ t\geq 0,
\end{split}
\end{equation}
with suitable initial and boundary conditions. We assume that the material properties $\mu^f$ and
$\rho^f$ are smooth in $\Omega^f$, and $\mu^c$ and $\rho^c$ are smooth in
$\Omega^c$. However, the material properties may not vary smoothly across the interface between
$\Omega^f$ and $\Omega^c$.

We consider the case where the interface conditions prescribe continuity of pressure and continuity of
normal flux~\cite{Graff1991}:
\begin{equation}\label{interface_condition_2d}
\begin{split}
F(x,0,t) &= C(x,0,t), \\
\mu^f(x,0) \frac{\p F}{\p y}(x,0,t) &= \mu^c(x,0) \frac{\p C}{\p y}(x,0,t),
\end{split}
\quad 0\leq x\leq 1,\quad t\geq 0.
\end{equation}
With the above set of interface conditions, the acoustic energy is conserved across the interface
\cite{Mattsson2008,Petersson2010}.

If the wave speeds are different in the two subdomains, for computational efficiency, different grid
spacings are desirable so that the number of grid points per wavelength becomes the same in both
subdomains~\cite{Hagstrom2012,Kreiss1972}. This leads to a mesh refinement interface with hanging
nodes along $y=0$. Special care is therefore needed to couple the solutions along the interface. In
the following, we consider a grid interface with mesh refinement ratio 1:2, and focus on the
numerical treatment of the interface conditions \eqref{interface_condition_2d}. Other ratios can be
treated analogously.

For simplicity, we consider periodic boundary conditions in $x$. For the spatial discretization, we use
a Cartesian mesh with mesh size $h$ in the (fine) domain $\Omega^f$ and $2h$ in the (coarse) domain
$\Omega^c$, see Figure~\ref{xfxc}.
\begin{figure}
\centering
\includegraphics[width=0.7\textwidth,trim={0 4cm 0 4cm},clip]{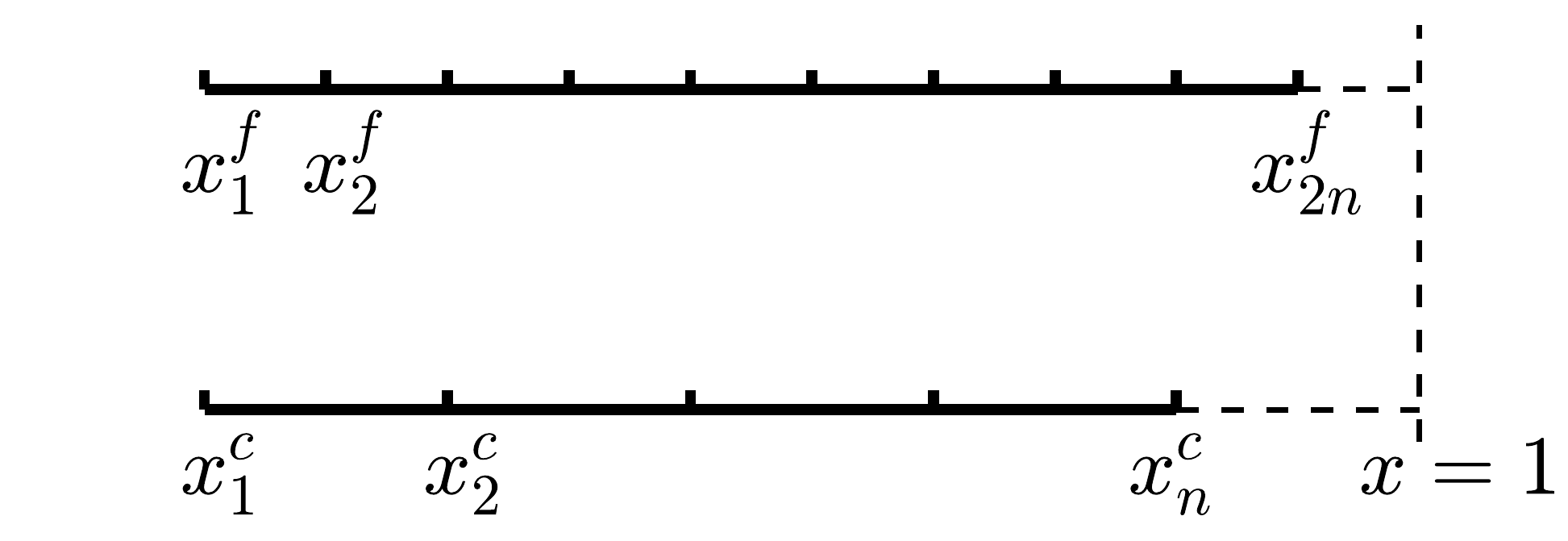}
\caption{A sketch of the grids $\bs x^f$ and $\bs x^c$.}
\label{xfxc}
\end{figure}
The number of grid points in the $x$ direction is $n$ in $\Omega^c$, and $2n$ in
$\Omega^f$, where $h=1/(2n)$. We have excluded grid points on the periodic boundary $x=1$, because the
solution at $x=1$ is the same as at $x=0$. The grid points $(\bs{x}^f, \bs{y}^f)$ in $\Omega^f$ and
$(\bs{x}^c,\bs{y}^c)$ in $\Omega^c$ are defined as
\begin{equation}\label{mesh}
\begin{cases}
x^f_i = (i-1)h,\quad i = 1,2,\cdots, 2n,\\
y^f_j = (j-1)h, \quad j = 0,1,2,\cdots,2n+1
\end{cases}
\text{and}\quad
\begin{cases}
x^c_i = 2(i-1)h,\quad i = 1,2,\cdots, n,\\
y^c_j = 2(j-n)h, \quad j = 0,1,2,\cdots,n+1
\end{cases},
\end{equation}
respectively. There are $2n$ ghost points 
\begin{equation}\label{gpf}
(x^f_i,y^f_0),\ i=1,2,\cdots,2n
\end{equation}
in $\Omega^f$ and $n$ ghost points 
\begin{equation}\label{gpc}
(x^c_i,y^c_{n+1}),\ i=1,2,\cdots,n
\end{equation}
in $\Omega^c$.


Notations for the two-dimensional SBP operators are introduced in Section \ref{sec_2d}.  The
SBP-GP method for the problem \eqref{wave2d}-\eqref{interface_condition_2d} is introduced in
Section~\ref{sec_original}. A second order accurate method was originally developed
in~\cite{Petersson2010}, where ghost points from both subdomains are used to impose the interface
conditions. Here, we generalize the technique to fourth order accuracy. In Section~\ref{sec_new}, we
propose a new SBP-GP method that only uses ghost points from the coarse domain. This reduces the
amount of computational work for calculating the numerical solution at the ghost points and improves
the structure of the associated linear system. We end this section with a discussion of the SBP-SAT
method and its relation to the SBP-GP method.

\subsection{SBP identities in two space dimensions}\label{sec_2d}

The one-dimensional SBP identities with ghost points \eqref{SBP_GP} are on exactly the same form as
those without ghost points, \eqref{SBP_NGP}. In the discussion of SBP identities in two space
dimensions, we use the notations for SBP operators with ghost points in $\Omega^f$. The same
notational convention of the tilde symbol is used to indicate that the corresponding variable uses
ghost points.

Let $\bs u$ and $\bs v$ be grid functions in $\Omega^f$. We define the two-dimensional scalar
product
\[
(\bs u, \bs v)_h = h^2 \sum_{i=1}^{2n}\sum_{j=1}^{2n-1} w_j u_{ij} v_{ij}.
\]
The weights $w_j$ do not depend on the index $i$ because of the periodic boundary condition in
$x$. In addition, we define the scalar product for grid functions on the interface
\begin{equation}\label{ip_g}
  \langle\bs{u}_{\Gamma}, \bs{v}_{\Gamma}\rangle_{h}= h \sum_{i=1}^{2n} u_{i} v_{i},
\end{equation}
where the subscript $\Gamma$ denotes the grid function on the interface. 

The SBP identity in two space dimensions in the fine domain $\Omega^f$ can be written as
\begin{align}
(\bs u, G_x(\mu) \bs v)_{h} &= -S_x(\bs u, \bs v), \label{SBP_GP_2Dx} \\
(\bs u, \wt G_y(\mu) \wt{\bs v})_{h} &= -S_y(\bs u, \bs v) - \langle\bs u_{\Gamma}, \wt{\bs  v}'_{\Gamma}\rangle_h, \label{SBP_GP_2Dy}
\end{align}
where the subscripts $x$ and $y$ denote the spatial direction that the operator acts on. The
bilinear forms $S_x(\cdot, \cdot)$ and $S_y(\cdot, \cdot)$ are symmetric and positive
semi-definite. There is no boundary term in \eqref{SBP_GP_2Dx} for $G_x(\mu)$ because of the
periodic boundary condition. For simplicity, we have omitted the boundary term from the boundary at
$y=1$. The last term on the right hand side of \eqref{SBP_GP_2Dy} corresponds to the boundary term
from the interface, where the $i^{th}$ element of $\wt{\bs v}'_{\Gamma}$ is
\begin{equation}\label{v_elementwise}
(v'_{\Gamma})_i =  \mu_{i,1}^f \wt{\bs{b}}_1^T \wt{\bs{v}}_{i,:}.
\end{equation}
Here we use Matlab's colon notation, i.e., $:$ denotes all grid points in the corresponding index direction. 

To condense notation, we define 
\[
\wt{G}_f (\mu)=  G_x(\mu)+\wt{G}_y(\mu),\quad S_f = S_x+S_y,
\]
so that \eqref{SBP_GP_2Dx}-\eqref{SBP_GP_2Dy} can be written as
\begin{equation}\label{SBP_GP_2D}
(\bs u, \wt G_f(\mu) \wt{\bs v})_{h} = -S_f(\bs u, \bs v) - \langle\bs u_{\Gamma}, \wt{\bs v}'_{\Gamma}\rangle_h.
\end{equation}
The SBP identity for the operators in the coarse domain $\Omega^c$ are defined similarly.

\subsection{The fourth order accurate SBP-GP method}\label{sec_original}
We approximate \eqref{wave2d} by
\begin{align}
\bs{\rho^f} \bs f_{tt} &= \wt G_f(\mu) \wt{\bs f}, \label{semi_original_f}\\
\bs{\rho^c} \bs c_{tt} &= \wt G_c(\mu) \wt{\bs c}, \label{semi_original_c}
\end{align}
where the grid functions $\bs f$ and $\bs c$ are finite difference approximations of the functions
$F(x,y,t)$ and $C(x,y,t)$ in \eqref{wave2d}, respectively. The diagonal matrices $\bs{\rho}^f$ and
$\bs{\rho}^c$ contain the material properties $\rho^f$ and $\rho^c$ evaluated on the fine and coarse
grids, respectively. Corresponding to the continuous interface condition
\eqref{interface_condition_2d}, the grid functions $\bs f$ and $\bs c$ are coupled through the
discrete interface conditions
\begin{align}
\bs{f}_{\Gamma} &=\mathcal{P}\bs c_{\Gamma},  \label{Operator_I} \\
\wt{\bs c}'_{\Gamma}&= \mathcal{R} \wt{\bs f}'_{\Gamma}.   \label{Operator_R}
\end{align}
Here, $\mathcal{P}$ is an operator that interpolates a coarse interface grid function to an
interface grid function on the fine grid. The operator $\mathcal{R}$ performs the opposite
operation. It restricts an interface grid function on the fine grid to the coarse
grid. Stability of the difference approximation relies on the compatibility between the operators
$\mathcal{P}$ and $\mathcal{R}$, as is specified in the following theorem.
\begin{theorem}
The semi-discretization \eqref{semi_original_f}-\eqref{Operator_R} satisfies the energy estimate
\begin{equation}\label{energy_rate_0}
\frac{d}{dt}\left[(\bs f_t, \bs{\rho^f}\bs f_t)_h + S_f(\bs f,\bs f)+(\bs c_t, \bs{\rho^c}\bs c_t)_{2h} + S_c(\bs c,\bs c) \right] = 0,
\end{equation}
if the interpolation and restriction operators are compatible,
\begin{equation}\label{P2RT}
\mathcal{P}=2\mathcal{R}^T.
\end{equation}
\end{theorem}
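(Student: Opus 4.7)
The plan is to mimic the standard continuous energy argument at the discrete level, using the two SBP identities \eqref{SBP_GP_2D} (one in $\Omega^f$ and its counterpart in $\Omega^c$) to reduce the rate of change of the discrete energy to an interface remainder, and then showing that the compatibility condition $\mathcal{P}=2\mathcal{R}^T$ is exactly what is needed to cancel that remainder.

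First, I would take the $(\cdot,\bs f_t)_h$ inner product of \eqref{semi_original_f} and apply \eqref{SBP_GP_2D}. Since $S_f$ is symmetric, the resulting identity reads
\begin{equation*}
\tfrac{1}{2}\tfrac{d}{dt}\bigl[(\bs f_t,\bs{\rho^f}\bs f_t)_h + S_f(\bs f,\bs f)\bigr]
   = -\langle \bs f_{t,\Gamma},\wt{\bs f}'_\Gamma\rangle_h.
\end{equation*}
The analogous step in the coarse domain uses the coarse SBP identity with mesh size $2h$, and produces the interface contribution with the opposite sign, because the interface $y=0$ is the upper boundary of $\Omega^c$ rather than the lower boundary as it is in $\Omega^f$ (this is just the 1D sign convention of Definition~\ref{def_SBP2}):
\begin{equation*}
\tfrac{1}{2}\tfrac{d}{dt}\bigl[(\bs c_t,\bs{\rho^c}\bs c_t)_{2h} + S_c(\bs c,\bs c)\bigr]
   = +\langle \bs c_{t,\Gamma},\wt{\bs c}'_\Gamma\rangle_{2h}.
\end{equation*}

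Adding these two identities and multiplying by $2$ gives
\begin{equation*}
\tfrac{d}{dt}\bigl[(\bs f_t,\bs{\rho^f}\bs f_t)_h+S_f(\bs f,\bs f)+(\bs c_t,\bs{\rho^c}\bs c_t)_{2h}+S_c(\bs c,\bs c)\bigr]
   = 2\langle \bs c_{t,\Gamma},\wt{\bs c}'_\Gamma\rangle_{2h} - 2\langle \bs f_{t,\Gamma},\wt{\bs f}'_\Gamma\rangle_h.
\end{equation*}
Now I would substitute the discrete interface conditions \eqref{Operator_I}–\eqref{Operator_R}. Differentiating \eqref{Operator_I} in time gives $\bs f_{t,\Gamma}=\mathcal P \bs c_{t,\Gamma}$, and \eqref{Operator_R} rewrites $\wt{\bs c}'_\Gamma$ as $\mathcal R \wt{\bs f}'_\Gamma$. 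Writing the scalar products \eqref{ip_g} as matrix expressions ($\langle a,b\rangle_h = h\, a^T b$ and $\langle a,b\rangle_{2h} = 2h\, a^T b$), the interface remainder becomes
\begin{equation*}
2(2h)\,\bs c_{t,\Gamma}^T \mathcal R\,\wt{\bs f}'_\Gamma - 2 h\,(\mathcal P\bs c_{t,\Gamma})^T \wt{\bs f}'_\Gamma
   = 2h\,\bs c_{t,\Gamma}^T\bigl(2\mathcal R - \mathcal P^T\bigr)\wt{\bs f}'_\Gamma.
\end{equation*}

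Hence the energy rate vanishes for arbitrary data on the interface precisely when $\mathcal P^T = 2\mathcal R$, i.e.\ $\mathcal P = 2\mathcal R^T$, which establishes \eqref{energy_rate_0}. The only subtle points in the argument are bookkeeping items rather than genuine obstacles: the sign flip in the coarse-domain interface contribution (interface is the right/upper boundary of $\Omega^c$), and the mismatched inner-product weights $h$ vs.\ $2h$ which is exactly what produces the factor of $2$ in the compatibility condition.
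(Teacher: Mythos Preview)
Your proof is correct and follows essentially the same route as the paper: apply the SBP identity in each subdomain to obtain the two interface contributions with opposite signs, then use $\bs f_{t,\Gamma}=\mathcal P\bs c_{t,\Gamma}$ and $\wt{\bs c}'_\Gamma=\mathcal R\wt{\bs f}'_\Gamma$ together with $\mathcal P=2\mathcal R^T$ to cancel them. The only cosmetic difference is that the paper chains the substitutions $\langle\mathcal P\bs c_{t,\Gamma},\wt{\bs f}'_\Gamma\rangle_h=\langle\bs c_{t,\Gamma},\mathcal R\wt{\bs f}'_\Gamma\rangle_{2h}=\langle\bs c_{t,\Gamma},\wt{\bs c}'_\Gamma\rangle_{2h}$ one at a time, whereas you collect both terms and factor out $2h\,\bs c_{t,\Gamma}^T(2\mathcal R-\mathcal P^T)\wt{\bs f}'_\Gamma$; the content is identical.
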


\begin{proof}
By using the SBP identity \eqref{SBP_GP_2D} in $\Omega^f$, we obtain
\[
(\bs f_t, \bs{\rho^f} \bs f_{tt})_h + S_f(\bs f_t, \bs f) =  -\left\langle\left(\bs f_{\Gamma}\right)_t, \wt{\bs f}'_{\Gamma}\right\rangle_h.
\]
Similarly, we have in $\Omega^c$
\[
(\bs c_t, \bs{\rho^c} \bs c_{tt})_{2h} + S_c(\bs c_t, \bs c) =  \left\langle\left(\bs c_{\Gamma}\right)_t, \wt{\bs c}'_{\Gamma}\right\rangle_{2h}.
\]
Summing the above two equations yields
\begin{equation}\label{proof_temp1}
\frac{d}{dt}\left[(\bs f_t, \bs{\rho^f}\bs f_t)_h + S_f(\bs f,\bs f)+(\bs c_t, \bs{\rho^c}\bs c_t)_{2h}
  + S_c(\bs c,\bs c) \right] = -2\left\langle\left(\bs f_{\Gamma}\right)_t, \wt{\bs f}'_{\Gamma}\right\rangle_h
+ 2\left\langle\left(\bs c_{\Gamma}\right)_t, \wt{\bs c}'_{\Gamma}\right\rangle_{2h}.
\end{equation}
To prove that the right-hand side vanishes, we first differentiate \eqref{Operator_I} in time, and use \eqref{ip_g} to obtain 
\[
\left\langle\left(\bs f_{\Gamma}\right)_t, \wt{\bs f}'_{\Gamma}\right\rangle_h = \left\langle\left(\mathcal{P}\bs
c_{\Gamma}\right)_t, \wt{\bs f}'_{\Gamma}\right\rangle_h.
\]
The compatibility condition \eqref{P2RT}, together with the scalar product \eqref{ip_g}, gives
\[
\left\langle\left(\mathcal{P}\bs c_{\Gamma}\right)_t, \wt{\bs f}'_{\Gamma}\right\rangle_h = \left\langle\left(\bs
c_{\Gamma}\right)_t, \mathcal{R}\wt{\bs f}'_{\Gamma}\right\rangle_{2h}.
\]
The second interface condition \eqref{Operator_R} leads to 
\begin{equation}\label{proof_temp3}
\left\langle\left(\bs c_{\Gamma}\right)_t, \mathcal{R}\wt{\bs f}'_{\Gamma}\right\rangle_{2h} = \left\langle\left(\bs
c_{\Gamma}\right)_t, \wt{\bs c}'_{\Gamma}\right\rangle_{2h}.
\end{equation}
The energy rate relation \eqref{energy_rate_0} follows by inserting \eqref{proof_temp3} into the
right hand side of \eqref{proof_temp1}. This proves the theorem.
\end{proof}

We note that the factor 2 in the compatibility condition \eqref{P2RT} arises because of the 1:2 mesh
refinement ratio in two dimension and the periodic boundary condition. The factor is 4 in the
corresponding three dimensional case.

For the mesh refinement ratio 1:2, the stencils in $\mathcal{P}$ and $\mathcal{R}$ can be easily
computed by a Taylor series expansion. For example, a fourth order interpolation operator in
\eqref{Operator_I} has the stencil
\begin{align*}
(f_{\Gamma})_{2i} &= -\frac{1}{16}(c_{\Gamma})_{i-1}+\frac{9}{16}(c_{\Gamma})_{i}+\frac{9}{16}(c_{\Gamma})_{i+1}-\frac{1}{16}(c_{\Gamma})_{i+2},\\
(f_{\Gamma})_{2i-1} &= (c_{\Gamma})_{i}
\end{align*}
on the hanging and coinciding nodes, respectively. Then, the compatibility condition \eqref{P2RT}
determines the restriction operator $\mathcal{R}$, used by the second interface condition \eqref{Operator_R},
\[
(c'_{\Gamma})_{i} =
-\frac{1}{32}(f'_{\Gamma})_{2i-4}+\frac{9}{32}(f'_{\Gamma})_{2i-2}+\frac{1}{2}(f'_{\Gamma})_{2i-1}+\frac{9}{32}(f'_{\Gamma})_{2i}-\frac{1}{32}(f'_{\Gamma})_{2i+2}.
\]
For other mesh refinement ratios, the interpolation and restriction operators can be constructed
using the techniques in \cite{Kozdon2016}.

Similar to Dirichlet boundary conditions for the one-dimensional problem, ghost points are
not explicitly involved in the first interface condition \eqref{Operator_I}. However, by
differentiating \eqref{Operator_I} twice in time and using the semi-discretized equations
\eqref{semi_original_f}-\eqref{semi_original_c}, we obtain
\begin{equation}\label{Operator_Itt}
\left.\left(\bs{\rho^f}\right)^{-1} \wt G_f(\mu) \wt{\bs f}\right|_{\Gamma} =
\mathcal{P}\left(\left.\left(\bs{\rho^c}\right)^{-1} \wt G_c(\mu) \wt{\bs c}\right|_{\Gamma}\right).
\end{equation}
This condition depends on the ghost point values on both sides of the interface and is equivalent to
\eqref{Operator_I} if the initial data also satisfies that condition. For this reason, we impose
interface conditions for the semi-discrete problem through \eqref{Operator_R} and
\eqref{Operator_Itt}. When discretizing \eqref{semi_original_f}-\eqref{semi_original_c} in time by
the predictor-corrector method, the fully discrete time-stepping method follows by the same principle as
the predictor-corrector method in Algorithm~\ref{alg2}. More precisely, for the predictor, step 2a is used to enforce
\eqref{Operator_R} and step 2b is used for \eqref{Operator_I}. Similarily, for the corrector, step
5a is used to enforce \eqref{Operator_R}, combined with step 5b for \eqref{Operator_I}.

The grid function $\wt{\bs c}'_{\Gamma}$ in \eqref{Operator_R} has $n$ elements. By writing
\eqref{Operator_R} in element-wise form it becomes clear that it is a system of $n$ linear equations
that depends on $3n$ unknown ghost point values. Similarly, \eqref{Operator_Itt} is a system of $2n$ linear
equations for the same $3n$ unknowns. In combination, the two interface conditions give a system of $3n$
linear equations, whose solution determines the $3n$ ghost point values. For the fully discrete
problem, this linear system must be solved once during the predictor step and once during the
corrector step.

The coefficients in the linear equations are independent of time. As a consequence, an efficient
solution strategy is to LU-factorize the interface system once, before the time stepping
starts. Backward substitution can then be used to calculate the ghost point values during the
time-stepping. For problems in three space dimensions, computations are performed on many processors
on a parallel distributed memory machine. Then it may not be straightforward to efficently calculate
the LU-factorization. As an alternative, iterative solvers can be used. For example, an iterative
block Jacobi relaxation method is used in \cite{Petersson2010}. It has proven to work well in
practice for large-scale problems.

\subsection{The improved SBP-GP method}\label{sec_new}
\begin{figure}
\includegraphics[width=.8\textwidth,trim={0 6cm 0 7cm},clip]{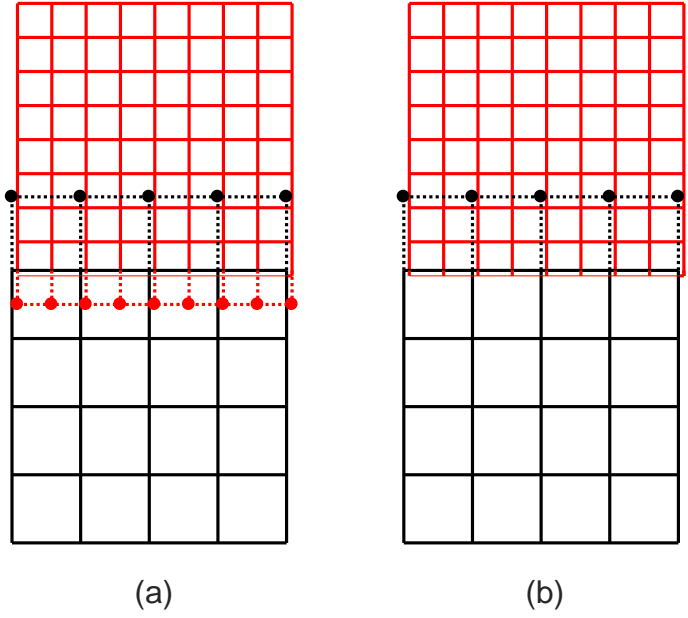}
\caption{A mesh refinement interface with ghost points denoted by filled circles. (a) ghost points
  from both domains. (b) ghost points from the coarse domain.}
\label{mesh_gp_12}
\end{figure}
In the improved SBP-GP method, the interface conditions are imposed through $n$ linear equations that
only depend on the $n$ ghost point values in $\wt{\bs{c}}$, see Figure~\ref{mesh_gp_12}~(b).
%
%
The key to the improved method is to combine SBP operators with and without ghost points. More
precisely, in $\Omega^c$ we use the SBP operator with ghost points. Thus, the semi-discretized
equation in $\Omega^c$ is the same as in the original SBP-GP method,
\begin{equation}\label{semi_new_c}
\bs{\rho^c}\bs c_{tt} = \wt G_c(\mu)\bs {\wt c}.
\end{equation}
In $\Omega^f$, we use \eqref{semi_original_f} only for the grid points that are not on the interface
\begin{equation}\label{eqn_f2}
(\bs{\rho^f}\bs f_{tt})_{:,j}  = (G_f(\mu) \bs f)_{:,j},\quad j=2,3,\ldots.
\end{equation}
For the grid points in $\Omega^f$ that are on the interface, we enforce the interface condition \eqref{Operator_I} such that 
\begin{equation}\label{eqn_f1}
\bs{f}_{:,1} = \mathcal{P}(\bs{c}_{:,n}). 
\end{equation}
Note that this equation does not depend on any ghost point values in $\Omega^f$.

To write the semi-discretization in a compact form and prepare for the energy analysis, we
differentiate \eqref{eqn_f1} twice in time, and use \eqref{semi_new_c} to obtain
\begin{equation}\label{eqn_f3}
\left(\bs f_{tt}\right)_{:,1} = \mathcal{P}\left((\bs c_{tt})_{:,n}\right) = 
\mathcal{P}\left(\left.\left(\bs{\rho}^c\right)^{-1}
\wt{G}_c(\mu)\wt{\bs{c}}\right|_{\Gamma}\right). 
\end{equation}
Equations \eqref{eqn_f2} and \eqref{eqn_f3} can be combined into
\begin{equation}\label{semi_new_f}
\left(\bs{\rho^f} \bs f_{tt}\right)_{:,j} := \left( L_h\bs f\right)_{:,j} =
\begin{cases}
\left(G_f(\mu) \bs f\right)_{:,1}+ \bs\eta_{:},& j=1,\\
\left(G_f(\mu) \bs f\right)_{:,j},& j=2,3,\ldots,
\end{cases}
\end{equation}
where 
\begin{equation*}
\bs\eta = \bs{\rho^f}|_{\Gamma}\mathcal{P}\left(\left.\left(\bs{\rho^c}\right)^{-1} \wt G_c(\mu)\bs {\wt c}\right|_{\Gamma}\right)
 - \left.G_f(\mu) \bs f\right|_{\Gamma}.
\end{equation*}
We note that $\bs\eta$ is a zero vector up to truncation errors in the SBP operator and the
interpolation operator. Therefore, $\bs\eta$ does not affect the order of accuracy in the spatial
discretization.

The semi-discretization \eqref{semi_new_c} and \eqref{semi_new_f} can be viewed as a hybridization
of the SBP-GP method and the SBP-SAT method. The spatial discretization \eqref{semi_new_f} in
$\Omega^f$ is on the SBP-SAT form, but the penalty term $\bs\eta$ depends on  the
ghost points values in $\wt{\bs c}$.

Continuity of the solution is imposed by \eqref{eqn_f1}, in the same way as in the original SBP-GP
method. But to account for the contribution from $\bs\eta$, continuity of flux (the second interface
condition in \eqref{interface_condition_2d}) must be imposed differently. Here we use
\begin{equation}\label{Rcondition}
\wt{\bs c}'_{\Gamma}= \mathcal{R} \left(\bs f'_{\Gamma}-hw_1\bs\eta\right),
\end{equation}
where $h$ is the mesh size in $\Omega_f$, and $w_1$ is the first entry in the scalar product
\eqref{wip}. Note that ghost points are used to compute $\wt{\bs c}'_{\Gamma}$ but not $\bs f'_{\Gamma}$.

Compared with \eqref{Operator_R} in the original SBP-GP method, the condition \eqref{Rcondition}
includes the term $h w_1 \bs\eta$. Because it is on the order of the truncation error it does not
affect the order of accuracy. As a consequence, \eqref{Rcondition} provides a valid way of enforcing
flux continuity. The following theorem illustrates why the $\bs{\eta}$-term is important for energy
stability.
\begin{theorem}
Assume that the interpolation and restriction operators satify \eqref{P2RT}. Then, the semi-discrete approximation
\eqref{semi_new_c}, \eqref{semi_new_f} and \eqref{Rcondition} is energy stable in the sense that
\eqref{energy_rate_0} holds.
\end{theorem}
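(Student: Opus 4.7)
The plan is to mimic the energy argument for the original SBP-GP method (the earlier theorem in this section), but to carefully track the extra contribution from the $\bs\eta$ term in the definition of $L_h$ and then absorb it using the modified flux interface condition \eqref{Rcondition}. The central idea is that the $h w_1 \bs\eta$ correction in \eqref{Rcondition} is designed to exactly cancel the boundary residual that arises from using $G_f(\mu)$ (without ghost points in $\Omega^f$) instead of $\wt G_f(\mu)$.

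First I would take the discrete inner product of $\bs c_t$ with \eqref{semi_new_c} using the SBP identity \eqref{SBP_GP_2D} applied in $\Omega^c$. This yields
\begin{equation*}
\frac{d}{dt}\left[(\bs c_t, \bs{\rho^c}\bs c_t)_{2h} + S_c(\bs c,\bs c)\right]
= 2\left\langle(\bs c_\Gamma)_t,\wt{\bs c}'_{\Gamma}\right\rangle_{2h},
\end{equation*}
where the sign reflects that the interface is the upper boundary of $\Omega^c$. In $\Omega^f$, I would take the inner product of $\bs f_t$ with \eqref{semi_new_f}. Splitting the sum into the $j=1$ row and the rest, and using the SBP identity for the no-ghost-point operator $G_f(\mu)$ (which produces the boundary term $-\langle(\bs f_\Gamma)_t,\bs f'_{\Gamma}\rangle_h$, with $\bs f'_{\Gamma}$ built from the non-ghost operator $\bs b_1$), the extra row contributes precisely $h w_1\langle(\bs f_\Gamma)_t,\bs\eta\rangle_h$. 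Collecting terms gives
\begin{equation*}
\frac{d}{dt}\left[(\bs f_t, \bs{\rho^f}\bs f_t)_{h} + S_f(\bs f,\bs f)\right]
= -2\left\langle(\bs f_\Gamma)_t,\bs f'_{\Gamma}-hw_1\bs\eta\right\rangle_{h}.
\end{equation*}

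Next I would add the two energy rate equations and reduce the fine-side boundary term to the coarse grid. Differentiating \eqref{eqn_f1} in time yields $(\bs f_\Gamma)_t=\mathcal{P}(\bs c_\Gamma)_t$, and the compatibility condition \eqref{P2RT} combined with the relation between the fine and coarse interface inner products gives
\begin{equation*}
\left\langle(\bs f_\Gamma)_t,\bs f'_{\Gamma}-hw_1\bs\eta\right\rangle_{h}
= \left\langle(\bs c_\Gamma)_t,\mathcal{R}\bigl(\bs f'_{\Gamma}-hw_1\bs\eta\bigr)\right\rangle_{2h}.
\end{equation*}
Applying the modified flux condition \eqref{Rcondition} replaces $\mathcal{R}(\bs f'_{\Gamma}-hw_1\bs\eta)$ by $\wt{\bs c}'_{\Gamma}$, at which point the two boundary contributions cancel exactly and \eqref{energy_rate_0} follows.

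The main subtlety, and the step I would be most careful with, is the accounting of boundary terms in the fine domain: one must be explicit that \eqref{semi_new_f} uses the no-ghost operator $G_f(\mu)$ (so its SBP identity produces $\bs f'_{\Gamma}$, not $\wt{\bs f}'_{\Gamma}$), and that the contribution of $\bs\eta$, being supported only on the $j=1$ row, picks up precisely one factor $h w_1$ from the weighted inner product. Once this bookkeeping is done, the cancellation against the coarse-side boundary term is dictated by the definition \eqref{Rcondition}, which makes clear that the $hw_1\bs\eta$ correction is not a stability penalty but rather the unique modification required to preserve energy conservation when ghost points are eliminated from the fine side.
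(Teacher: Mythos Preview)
Your proposal is correct and follows essentially the same approach as the paper's proof: compute the energy rate in each subdomain via the SBP identities (using the no-ghost operator $G_f(\mu)$ in $\Omega^f$ so that the boundary term involves $\bs f'_\Gamma$ and picks up the extra $hw_1\langle(\bs f_\Gamma)_t,\bs\eta\rangle_h$), add the two rates, transfer the fine-side term to the coarse grid via $(\bs f_\Gamma)_t=\mathcal{P}(\bs c_\Gamma)_t$ and \eqref{P2RT}, and cancel using \eqref{Rcondition}. Your remarks about the bookkeeping of the $hw_1$ factor and the role of $\bs\eta$ match the paper's reasoning exactly.
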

\begin{proof}
From \eqref{semi_new_f} , we have 
\begin{align*}
\left(\bs f_t, \bs{\rho^f}\bs f_{tt}\right)_h & = \left(\bs f_t, G_f(\mu) \bs f\right)_h + hw_1 \left\langle
  \bs{f}_t|_{\Gamma}, \bs{\eta} \right\rangle_h \\
& = -S_f(\bs f_t, \bs f) - \left\langle  \bs{f}_t|_{\Gamma}, \bs f'_{\Gamma}\right\rangle_h+ hw_1 \left\langle  \bs{f}_t|_{\Gamma}, \bs\eta\right\rangle_h\\
& = -S_f(\bs f_t, \bs f) + \left\langle  \bs{f}_t|_{\Gamma}, -\bs f'_{\Gamma}+hw_1\bs\eta\right\rangle_h.
\end{align*}
The contribution from the domain $\Omega^c$ is 
\[
(\bs c_t, \bs{\rho^c}\bs c_{tt})_{2h}  = -S_c(\bs c_t, \bs c) + \langle  \bs{c}_t|_{\Gamma}, \wt{\bs c}'_{\Gamma}\rangle_{2h}.
\]
Adding the two above equations gives
\begin{multline*}
\frac{d}{dt}\left[(\bs f_t, \bs{\rho^f}\bs f_t)_h + S_f(\bs f,\bs f)+(\bs c_t, \bs{\rho^c}\bs
  c_t)_{2h} + S_c(\bs c,\bs c) \right]
= 2\langle  \bs{f}_t|_{\Gamma}, -\bs f'_{\Gamma}+hw_1\bs\eta\rangle_h + 2\left\langle
\bs{c}_t|_{\Gamma}, \wt{\bs c}'_{\Gamma}\right\rangle_{2h} \\
 = 2\langle  \mathcal{P}\bs{c}_t|_{\Gamma}, -\bs f'_{\Gamma}+hw_1\bs\eta\rangle_h + 2\langle
 \bs{c}_t|_{\Gamma}, \wt{\bs c}'_{\Gamma}\rangle_{2h} 
 = 2\langle  \bs{c}_t|_{\Gamma}, \mathcal{R}(-\bs f'_{\Gamma}+hw_1\bs\eta)\rangle_{2h} + 2\langle
 \bs{c}_t|_{\Gamma}, \wt{\bs c}'_{\Gamma}\rangle_{2h}  = 0. 
\end{multline*}
\end{proof}

With the predictor-corrector method for the time discretization of \eqref{semi_new_c} and
\eqref{semi_new_f}, the fully discrete algorithm can be adopted from Algorithm \ref{alg2}. We impose
\eqref{Rcondition} in step 2a for the predictor, and in step 5a for the corrector. We note that
\eqref{Rcondition} corresponds to a system of $n$ linear equations. The right-hand sides are
different in the linear systems in steps 2a and 5a, but the matrix is the same. It can therefore be
LU-factorized once, before time integration starts. The linear systems can then be solved by
backward substitution during the time stepping. The improved SBP-GP method presented in this section
is evaluated through numerical experiments in Section~\ref{sec_num}.

\subsection{The SBP-SAT method}
In the SBP-SAT method, the penalty terms for the interface conditions \eqref{interface_condition_2d} can be constructed by combining the penalty terms for the Neumann problem in Section \ref{sec_Neumann} and the Dirichlet problem in Section \ref{sec_Dirichlet}. The semi-discretization can be written as
\begin{align}
\bs{\rho^f} \bs f_{tt} &= G_f(\mu) \bs f+ \bs{p_f},  \label{SAT_INTf}\\
\bs{\rho^c} \bs c_{tt} &= G_c(\mu) \bs{c} + \bs{p_c}. \label{SAT_INTc}
\end{align}

There are two choices of $\bs{p_f}$ and $\bs{p_c}$. The first version, developed in \cite{Wang2016}, uses three penalty terms
\begin{align}
(p_f)_{i,:} = W^{-1}_f \left[ -\mu_{i,1}^f \f{1}{2}\bs b_1^f \left(\bs{f}_{\Gamma}-\mathcal{P} \bs{c}_{\Gamma}\right)_i  -\mu_{i,1}^f \f{\tau_f}{h}\bs e_1^f \left(\bs{f}_{\Gamma}-\mathcal{P} \bs{c}_{\Gamma}\right)_i       +    \f{1}{2} \bs e_1^f\left(\bs{f}'_{\Gamma}-\mathcal{P} \bs{c}'_{\Gamma}\right)_i\right], \label{SATf}\\
(p_c)_{i,:} =  W^{-1}_c \left[ -\mu_{i,1}^c\f{1}{2}\bs b_1^c \left(\bs{c}_{\Gamma}-\mathcal{R} \bs{f}_{\Gamma}\right)_i   -\mu_{i,1}^c\f{\tau_c}{2h}\bs e_1^c \left(\bs{c}_{\Gamma}-\mathcal{R} \bs{f}_{\Gamma}\right)_i      +    \f{1}{2} \bs e_1^c\left(\bs{c}'_{\Gamma}-\mathcal{R} \bs{f}'_{\Gamma}\right)_i\right], \label{SATc}
\end{align}
where $\bs b_1$ and $\bs e_1$ act in the $y$ direction. 
%
In both \eqref{SATf} and \eqref{SATc}, the first two terms  penalize continuity of the solution, and the third term penalizes continuity of the flux. 
The scheme \eqref{SAT_INTf}-\eqref{SATc} is energy stable when the penalty parameters satisfy 
\begin{equation}\label{tau_int}
\tau_f = \frac{1}{2}\tau_c \geq \max_{i,j}\left(\frac{(\mu^f_{i,1})^2}{2(\mu_{\min}^f)_i\alpha}, \frac{(\mu^c_{j,n})^2}{2(\mu_{\min}^c)_j\alpha}\right),
\end{equation}
where $i=1,2,\ldots, 2n$ and $j=1,2,\ldots,n$. 

The second choice of SATs uses four penalty terms \cite{Wang2018}, which has a better stability
property for problems with curved interfaces.  The method was improved further in
\cite{Almquist2019} from the accuracy perspective when non-periodic boundary conditions are used in
the $x$-direction. In addition, the penalty parameters in \cite{Almquist2019} are optimized and 
are sharper than those in \cite{Wang2018}. 
As will be seen in the numerical experiments, the sharper penalty
parameters lead to an improved CFL condition. 

\subsection{Computational complexity}
In the next section, we test numerically the CFL condition of the improved SBP-GP method and the
SBP-SAT method for cases with a grid refinement interface. To enable a fair comparison in terms of
computational efficiency, in this section we estimate the computational cost of the two methods for
one time step. Since the interior stencils of the two SBP operators are the same, the main
difference in computational cost comes from how the interface conditions 
are imposed at each time step. For simplicity, we only consider problems with constant coefficients
when estimating the computational complexity. 
Also note that the number of floating point operations (flops) stated below depends on the
implementation of the algorithms, and should not be considered exact.

In the improved SBP-GP method, a system of $n$ linear equations must be solved at each time step,
where $n$ is the number of grid points on the interface in the coarse domain. The system matrix is
banded with bandwidth 7, so the LU factorization requires $49n$ flops,
but it is only computed once before the time stepping begins. In each time step, updating the right hand
side of the linear system and solving by backward substitution requires $173n$ and $5n$ flops,
respectively. This results in a grand total of $178n$ flops at each time step.

In the SBP-SAT method, the interface conditions are imposed by the SAT terms, which are updated at
each time step. This calculation requires $157n$ flops. We conclude that imposing interface
conditions with the SBP-GP and the SBP-SAT method require a comparable number of floating point
operations per time step. Thus, the main difference in computational efficiency comes from the different
CFL stability restrictions on the time step, which is investigated in the following section.

\section{Numerical experiments}\label{sec_num}
In this section, we conduct numerical experiments to compare the SBP-GP method and the SBP-SAT method in terms of computational efficiency. Our first focus is CFL condition, which is an important factor in solving large-scale problems. We numerically test the effect of different boundary and interface techniques on the CFL condition with the predictor-corrector time stepping method. We then compare $L^2$ error and convergence rate of the SBP-GP method and the SBP-SAT method with the same spatial and temporal discretizations. The convergence rate is computed by
\[\log\left(\frac{e_h}{e_{2h}}\right)\bigg/ \log\left(\frac{1}{2}\right),\]
where $e_{2h}$ is the $L^2$ error on a grid $\bs x$, and $e_{h}$ is the $L^2$ error on a grid with grid size half of $\bs x$ in each subdomain and spatial direction. 

\subsection{Time-stepping stability restrictions}\label{sec_cfl_analysis}
We consider the scalar wave equation in one space dimension
\begin{equation}\label{1dwave_ne}
\rho U_{tt} =  (\mu U_x)_x + F,
\end{equation}
in the domain  $x\in [-\pi/2,\pi/2]$ with non-periodic boundary conditions. 



In \cite{Sjogreen2012}, it is proved that for the predictor-corrector time stepping method, the time
step constraint by the CFL condition is
\begin{equation}\label{deltat_1d}
\delta_t\leq \frac{2\sqrt{3}}{\sqrt{\kappa}},
\end{equation}
where $\kappa$ is the spectral radius of the spatial discretization matrix.  In general, we do not
have a closed form expression for $\kappa$. In the special case of periodic boundary conditions and
constant coefficients, $\kappa$ is given by the following lemma.

\begin{lemma}\label{lemmaQ} 
Consider \eqref{1dwave_ne} with periodic boundary conditions, constant $\rho$, $\mu$ and zero
forcing $F=0$. If the equation is discretized with standard fourth order accurate centered finite
differences, the spectral radius becomes
\[
\kappa = \frac{16\mu}{3h^2\rho},
\]
where $h$ is the grid spacing. 
\end{lemma}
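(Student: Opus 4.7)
The plan is a direct Fourier symbol calculation. Because the equation has constant coefficients and periodic boundaries, the spatial operator $(\mu/\rho) D_2$ (where $D_2$ is the standard fourth order centered second derivative) is a circulant matrix that is simultaneously diagonalized by the discrete Fourier basis $\{e^{i\xi_m j}\}$ with $\xi_m = 2\pi m/N$ on $N$ equispaced points. Its eigenvalues are therefore the values of the Fourier symbol of $D_2$ evaluated at the grid frequencies $\xi_m$, and the spectral radius is the maximum of $|\hat{D}_2(\xi)|$ over these frequencies (bounded above by the sup over $\xi\in[-\pi,\pi]$).

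First I would write down the interior stencil explicitly,
\[
D_2 u_j = \frac{1}{h^2}\left(-\tfrac{1}{12}u_{j-2}+\tfrac{4}{3}u_{j-1}-\tfrac{5}{2}u_j+\tfrac{4}{3}u_{j+1}-\tfrac{1}{12}u_{j+2}\right),
\]
and apply it to $u_j=e^{i\xi j}$ to extract the symbol
\[
\hat{D}_2(\xi)=\frac{1}{h^2}\left(-\tfrac{1}{6}\cos(2\xi)+\tfrac{8}{3}\cos\xi-\tfrac{5}{2}\right).
\]
Then I would use $\cos(2\xi)=2\cos^2\xi-1$ to rewrite it as a quadratic in $y=\cos\xi$,
\[
\hat{D}_2=\frac{1}{h^2}\,f(y),\qquad f(y)=-\tfrac{1}{3}y^2+\tfrac{8}{3}y-\tfrac{7}{3}.
\]

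Next I would optimise $f$ on $[-1,1]$. Since $f'(y)=-\tfrac{2}{3}y+\tfrac{8}{3}$ vanishes only at $y=4$, $f$ is strictly increasing on $[-1,1]$, so its extrema on that interval are $f(1)=0$ (the constant mode) and $f(-1)=-16/3$ (the highest grid frequency $\xi=\pi$). Hence the spectrum of $D_2$ lies in $[-16/(3h^2),0]$, giving spectral radius $16/(3h^2)$. Multiplying by the scalar $\mu/\rho$ yields $\kappa=16\mu/(3h^2\rho)$, as claimed.

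There is no real obstacle here — the only mild subtlety is making clear that the bound $\sup_{\xi\in[-\pi,\pi]}|\hat{D}_2|$ is actually attained on the grid: for any $N\geq 2$ that is even, $\xi=\pi$ is one of the Fourier frequencies, so the supremum equals the discrete maximum. (For odd $N$ the discrete maximum approaches $16/(3h^2)$ as $N$ grows, and one can either restrict to even $N$ or invoke that the stated spectral radius is a tight upper bound used to bound $\delta_t$ from \eqref{deltat_1d}.) I would include one line noting this and otherwise keep the proof to the symbol computation and the quadratic optimisation.
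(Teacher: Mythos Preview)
Your proposal is correct and follows essentially the same route as the paper: compute the Fourier symbol of the fourth order centered second-difference stencil and maximize its modulus. The only cosmetic difference is that the paper factors the symbol as $-\tfrac{4}{h^2}\sin^2(\omega h/2)\bigl(1+\tfrac{1}{3}\sin^2(\omega h/2)\bigr)$, from which the maximum at $\sin^2(\omega h/2)=1$ is read off without calculus, whereas you reach the same conclusion via the quadratic in $\cos\xi$; your extra remark about attainment of the supremum on the discrete grid is a point the paper leaves implicit.
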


\begin{proof}
See Appendix 1.
\end{proof}

In the following numerical experiments, we choose $\rho=\mu=1$, which gives the estimated CFL condition 
$\delta_t\leq 1.5h$. This case is used below as a reference when comparing CFL conditions. 

First, we consider the Neumann boundary condition at $x=\pm \pi/2$, and use the SBP-GP and the
SBP-SAT method to solve the equation \eqref{1dwave_ne} until $t=200$. For the SBP-GP method with the
fourth order SBP operator derived in \cite{Sjogreen2012}, we find that the scheme is stable when
$\delta_t\leq 1.44h$. In other words, the time step needs to be reduced by about $4\%$ when
comparing with the reference CFL condition. For the SBP-SAT method with the fourth order SBP
operator derived in \cite{Mattsson2004}, the scheme is stable up to the reference CFL condition
$\delta_t\leq 1.5h$.

Next, we consider the equation with Dirichlet boundary conditions at $x=\pm \pi/2$. To test the
injection method and the SAT method, we use the fourth order accurate SBP operator without ghost
point \cite{Mattsson2004}. When using the injection method to impose the Dirichlet boundary
condition, the scheme is stable with $\delta_t \leq 1.5h$. However, when using the SAT method to
weakly impose the Dirichlet boundary condition and choosing the penalty parameter $20\%$ larger than
its stability-limiting value, the scheme is only stable if $\delta_t \leq 1.16h$. This amounts to a
reduction in time step by $23\%$. If we decrease the penalty parameter so that it is only $0.1\%$
larger than its stability-limiting value, then the scheme is stable with $\delta_t \leq 1.25h$,
i.e. the time step needs to be reduced by $17\%$, compared to the injection method. 

In conclusion, for the Neumann boundary condition, both the SBP-GP and the SBP-SAT method can be
used with a time step comparable to that given by the reference CFL condition. This is not
surprising, given the similarity of the methods and in the discrete energy expressions. For the Dirichlet
boundary condition, we need to reduce the time step by $23\%$ in the SAT method. If we instead
inject the Dirichlet data, then the scheme is stable with the time step given by the reference CFL
condition. 

\subsection{Discontinuous material properties}\label{ex_Snell}
We now investigate the SBP-GP and SBP-SAT method for the wave equation with a mesh refinement
interface. The model problem is
\begin{equation}\label{eqn}
\rho U_{tt} = \nabla\cdot(\mu\nabla U)+F, 
\end{equation}
in a two-dimensional domain $\Omega=[0,4\pi]\times[-4\pi,4\pi]$, where $\rho(x,y)>0$, $\mu(x,y)>0$,
and the wave speed is $c=\sqrt{\mu/\rho}$. Equation \eqref{eqn} is augmented with Dirichlet boundary
conditions at $y=\pm 4\pi$, and periodic boundary conditions at $x=0$ and $x=4\pi$.

The domain $\Omega$ is divided into two subdomains $\Omega^1=[0,4\pi]\times [-4\pi,0]$ and
$\Omega^2=[0,4\pi]\times [0,4\pi]$ with an interface $\Gamma$ at $y=0$. The material parameter $\mu$
is a smooth function in each subdomain, but may be discontinuous across the interface. In
particular, we consider two cases: $\mu$ is piecewise constant in Section \ref{ex_Snell}, and $\mu$
is a smooth function in Section \ref{ex_smooth}. In each case, we test the fourth order accurate
SBP-GP method and the SBP-SAT method, both in terms of the CFL condition and the convergence rate. 

When $\mu$ is piecewise constant, an analytical solution can be constructed by Snell's law.
We choose a unit density $\rho=1$ and denote the piecewise constant $\mu$ as
\begin{equation*}
\mu(x,y)=\begin{cases}
\mu_1,\quad & (x,y)\in\Omega^1,\\
\mu_2,\quad & (x,y)\in\Omega^2,
\end{cases}
\end{equation*}
where $\mu_1\neq \mu_2$. 

\begin{figure}
\centering
\includegraphics[width=.5\textwidth]{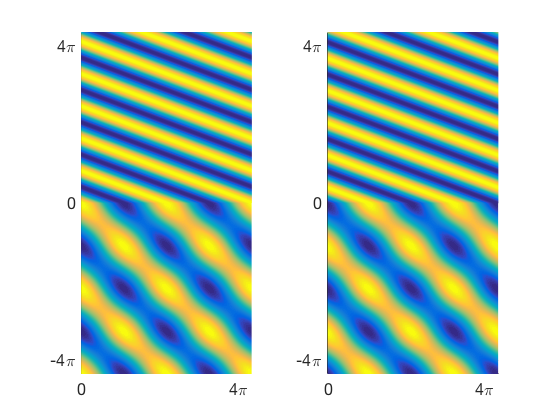}
\caption{The exact solution at time $t=0$ (left), and $t=11$ (right) when the wave has propagated for about 2.5 temporal periods. The solution is continuous at the material interface $x=0$ but the normal derivative is discontinuous due to the material discontinuity.}
\label{fig_exact_sol}
\end{figure}

Let an incoming plane wave $U_I$ travel in $\Omega^1$ and impinge on the interface $\Gamma$. The
resulting field consists of the incoming wave $U_I$, as well as a reflected field $U_R$ and a
transmitted field $U_T$. With the ansatz
\begin{equation*}
\begin{split}
& U_I = \cos(x+y-\sqrt{2\mu_1} t), \\
& U_R = R\cos(-x+y+\sqrt{2\mu_1} t), \\
& U_T = T\cos(x+ky-\sqrt{2\mu_1} t),
\end{split}
\end{equation*}
where $k=\sqrt{2\mu_1/\mu_2-1}$, the two parameters $R$ and $T$ are determined by the interface conditions
\begin{equation*}
\begin{split}
 U_I+U_R&=U_T,\\
{\mu_1} \frac{\partial}{\partial x} (U_I+U_R)&={\mu_2} \frac{\partial}{\partial x} U_T,
\end{split}
\end{equation*}
yielding $R = (\mu_1-\mu_2 k)/(\mu_1+\mu_2 k)$ and $T=1+R$.

In the following experiments, we choose $\mu_1=1$ and $\mu_2=0.25$. As a consequence, the wave speed
is $c_1=1$ in $\Omega^1$ and $c_2=0.5$ in $\Omega^2$. To keep the number of grid points per
wavelength the same in two subdomains, we use a coarse grid with grid spacing $2h$ in $\Omega^1$,
and a fine grid with grid spacing $h$ in $\Omega^2$. We let the wave propagate from $t=0$ until
$t=11$. The exact solution at these two points in time are shown in Figure \ref{fig_exact_sol}. 

\subsubsection{CFL condition}
To derive an estimated CFL condition, we perform a Fourier analysis in each subdomain $\Omega^1$ and $\Omega^2$. Assuming periodicity in both spatial directions, the spectral radius of the spatial discretization in $\Omega^1$ and $\Omega^2$ is the same $\kappa = 4/(3h^2)$, given by Lemma \ref{lemmaQ}. By using \eqref{deltat_1d}, we find that the estimated CFL condition is 
\begin{equation}\label{deltat_2d}
\delta_t\leq \frac{1}{\sqrt{2}}\frac{2\sqrt{3}}{\sqrt{4/(3h^2)}}=\frac{3}{\sqrt{2}}h\approx 2.12h.
\end{equation}
We note that  the restriction on time step is the same in both subdomains. The factor $1/\sqrt{2}$ in \eqref{deltat_2d}, which is not present in \eqref{deltat_1d}, comes from \eqref{eqn} having two space dimensions. 


For the SBP-GP method, we have found numerically that the method is stable when the time step $\delta_t\leq 2.09h$. This indicates that the non-periodic boundary condition and the non-conforming grid interface do not affect time step restriction of the SBP-GP method. With $\delta_t=2.09h$ and $641^2$ grid points in the coarse domain, we perform a long time simulation until $t=1000$, and plot the $L_2$ error in Figure \ref{LongTimeSimulation}. We observe that the $L_2$ error does not grow in time, which verifies that the discretization is stable. 

\begin{figure}
\centering
\includegraphics[width=.65\textwidth]{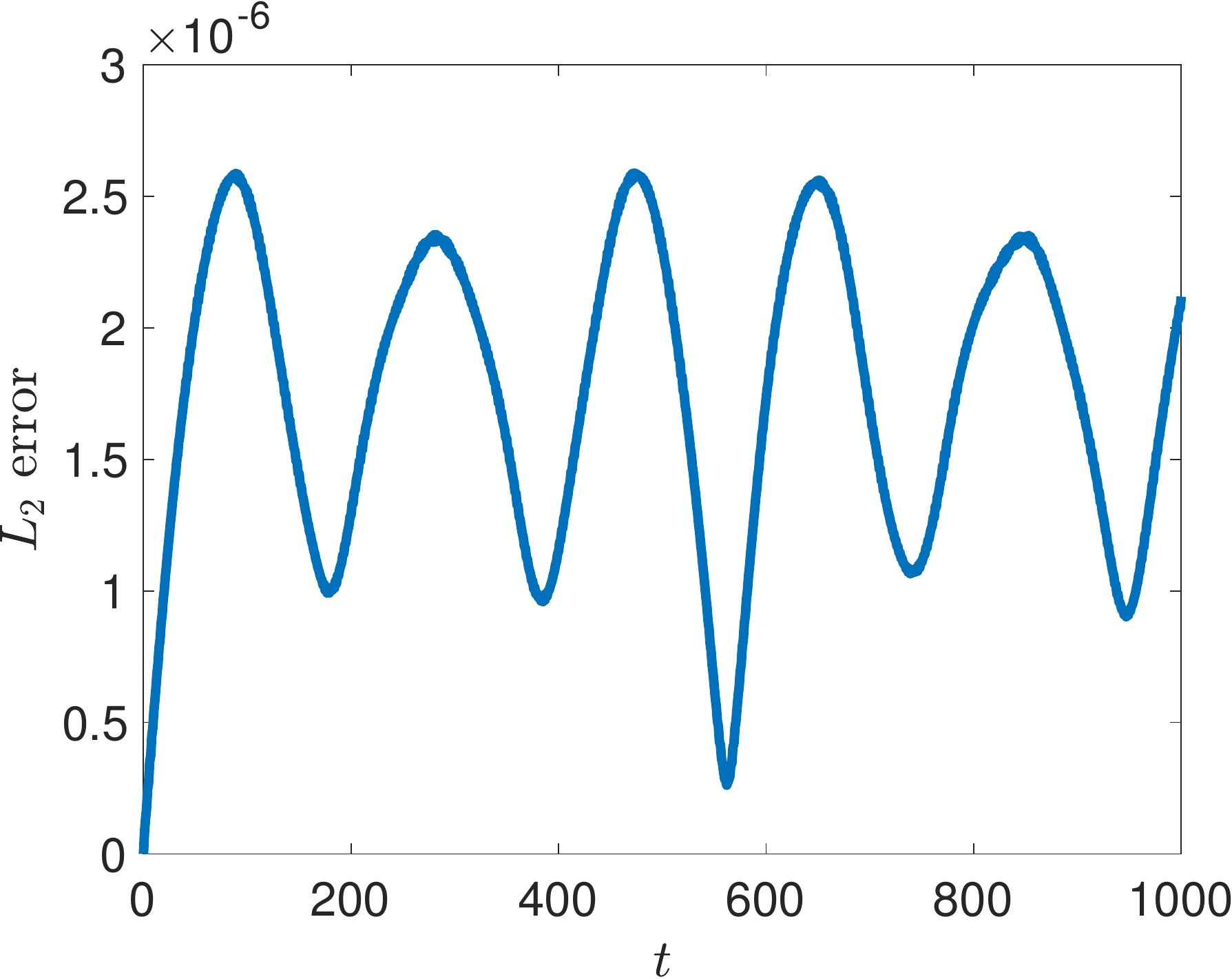}
\caption{$L_2$ error for the improved SBP-GP method for a long time simulation to time $t=1000$
  ($\sim$ 225 temporal periods).} 
\label{LongTimeSimulation}
\end{figure}

For the SBP-SAT method with three penalty terms, the stability limit appears to be $\delta_t\leq 1.18h$, 
which represents approximately a 45\% reduction in the time step. When using four penalty terms
and the sharper penalty parameters \cite{Almquist2019}, the scheme is stable for $\delta_t\leq
1.82h$, which is an improvement from the scheme with three penalty terms, but not as good as the
SBP-GP method.


\subsubsection{Conditioning and sparsity of the linear system for ghost points}
In the SBP-GP method, a system of linear equations needs to be solved to compute the solution at the 
ghost points. To demonstrate the superiority of the improved SBP-GP method, we examine the
conditioning and sparsity of the system on three meshes.

\begin{table}
\centering
\begin{tabular}{lllll}
\toprule
$N_c$ & $cond_i$ & $cond_o$ & $nnz_i$ & $nnz_o$\\ \midrule
$321^2$ & 1.26 & 778 &2240 & 4160\\
$641^2$ & 1.26 & 1680 & 4480 & 8320\\
$1281^2$ & 1.26 & 3425& 8960 &16640\\
\bottomrule
\end{tabular}
\caption{Condition number $cond$ and number of nonzero elements $nnz$ in the matrix for ghost points. The subscript $o$ and $i$ correspond to the original and improved SBP-GP method, respectively. $N_c$ denotes the number of grid points in the coarse domain.}
\label{tab_cond}
\end{table}

In Table \ref{tab_cond}, we observe that for the improved SBP-GP method, the condition number is close to one and is independent of the mesh size. In contrast, the condition number in the original SBP-GP method is several magnitudes larger, and grows with mesh refinement. Furthermore, the number of nonzero elements in the improved SBP-GP matrix is approximately half the number of nonzero elements in the matrix in the original method. Hence, the system of linear equations in the improved SBP-GP method is both more sparse and better conditioned. 

\subsubsection{Convergence rate}
We now perform a convergence study for the SBP-GP method and the SBP-SAT method. We choose the time
step $\delta_t = h$ so that both methods are stable.  The $L^2$ errors in the numerical solution
with the SBP-GP method are shown in Table \ref{tab1}. Though the dominating truncation error is
$\mathcal{O}(h^2)$ at grid points near boundaries, the numerical solution converges to fourth order
accuracy, i.e. two orders are gained in convergence rate \cite{Wang2017a}. 
\begin{table}
\centering
\begin{tabular}{ll}
\toprule
$2h$ & $L^2$ error (rate)\\ 
\midrule
1.57$\times  10^{-1}$ &  1.6439$\times 10^{-3}$\\
7.85$\times  10^{-2}$ &  1.0076$\times 10^{-4}$ (4.02)\\
3.93$\times  10^{-2}$ &  6.2738$\times 10^{-6}$ (4.01)\\
1.96$\times  10^{-2}$ &  3.9193$\times 10^{-7}$ (4.00)\\
9.81$\times  10^{-3}$ &  2.4344$\times 10^{-8}$ (4.01)\\
\bottomrule
\end{tabular}
\caption{$L^2$ errors (convergence rates) of the fourth order SBP-GP method for piecewise constant $\mu$.}
\label{tab1}
\end{table}

For the SBP-SAT method with three penalty terms \eqref{SAT_INTf}-\eqref{SATc}, the $L^2$ errors labeled as SAT3 in Table \ref{tab_4th_sat} only converge at a rate of three. Because the dominating  truncation error is $\mathcal{O}(h^2)$ at grid points close to boundaries, we gain only one order of accuracy in the numerical solution. This suboptimal convergence behavior has also been observed in other settings \cite{Wang2017a}.

\begin{table}
\centering
\begin{tabular}{llll}
\toprule
$2h$ & $L^2$ error (rate) SAT3 & $L^2$ error (rate) SAT4 & $L^2$ error (rate) INT6 \\ 
\midrule
1.57$\times  10^{-1}$ & 3.0832$\times  10^{-3}$           & 2.1104$\times 10^{-3}$ & 2.1022$\times  10^{-3}$  \\
7.85$\times  10^{-2}$ & 3.4792$\times 10^{-4}$ (3.15) & 1.1042$\times 10^{-4}$ (4.26)  & 1.1014$\times 10^{-4}$ (4.25) \\
3.93$\times  10^{-2}$ &  4.4189$\times 10^{-5}$ (2.98) & 6.6902$\times 10^{-6}$ (4.04) & 6.6815$\times 10^{-6}$ (4.04)\\
1.96$\times  10^{-2}$ &  5.6079$\times 10^{-6}$ (2.98) & 4.0374$\times 10^{-7}$ (4.05) & 4.0346$\times 10^{-7}$ (4.05)\\
9.81$\times  10^{-3}$ & 7.0745$\times 10^{-7}$ (2.99) & 2.4659$\times 10^{-8}$ (4.03) & 2.4651$\times 10^{-8}$ (4.03) \\
\bottomrule
\end{tabular}
\caption{$L^2$ errors (convergence rates) of the fourth order SBP-SAT method  for piecewise constant $\mu$.}
\label{tab_4th_sat}
\end{table}

We have found two simple remedies to obtain a fourth order convergence rate. First, when using the
SBP-SAT method with four penalty terms, we obtain a fourth order convergence rate, as shown in the third
column of Table \ref{tab_4th_sat} labeled as SAT4. Alternatively, we can use three penalty terms but
employ a sixth order interpolation and restriction operators at the non-conforming interface. This also leads
to a fourth order convergence rate, see the fourth column of Table \ref{tab_4th_sat}, labeled
INT6. In both approaches, the dominating truncation error is still $\mathcal{O}(h^2)$ at a few grid
points close to the boundaries. However, different penalty terms will give different boundary
systems in the normal mode analysis for convergence rate. The precise rate of convergence can be
analyzed by the Laplace-transform method, but is beyond the scope of this paper.

We also observe that the $L^2$ errors of the SBP-GP method is almost identical to that of the
SBP-SAT method (SAT4 and INT6) with the same mesh size.

\subsection{Smooth material parameters}\label{ex_smooth}
In this section, we test the two methods when the material parameters are smooth functions in the
whole domain $\Omega$. More precisely, we use material parameters
\begin{equation*}
\begin{split}
\rho&=-\cos(x)\cos(y)+3, \\
\mu&=\cos(x)\cos(y)+2.
\end{split}
\end{equation*}
 The forcing function and initial conditions are chosen so that the manufactured solution becomes 
\begin{equation*}
u(x,y,t) = \sin(x+2)\cos(y+1)\sin(t+3).
\end{equation*}
We use the same grid as in Section \ref{ex_Snell} with grid size $2h$ in $\Omega^1$ and $h$ in
$\Omega^2$. The parameters $\rho_{\min} = 2$ and $\mu_{\max}=3$ take the extreme values at the same
grid point. Therefore, a Fourier analysis of the corresponding periodic problem gives the time step 
restriction
\begin{equation*}
\delta_t\leq \frac{1}{\sqrt{2}}\frac{2\sqrt{3}}{\sqrt{16/(3h^2)}\sqrt{\mu_{\max}/\rho_{\min}}}=\frac{\sqrt{3}}{2}h\approx 0.86h.
\end{equation*}
Numerically, we have found that the SBP-GP method is stable when $\delta_t\leq 0.86h$. This shows again that the non--periodicity and interface coupling do not affect the CFL condition in the SBP-GP method.  The SBP-SAT method is stable with $\delta_t\leq 0.77h$, which means that the time step needs to be reduced by approximately 10\%. 

To test convergence, we choose the time step $\delta_t= 0.7h$ so that both the SBP-GP method and SBP-SAT method are stable. The $L^2$ errors at $t=11$ are shown in Table \ref{tab3} for the SBP-GP method. We observe a fourth order convergence rate.
\begin{table}
\centering
\begin{tabular}{ll}
\toprule
$2h$ & $L^2$ error (rate)\\ 
\midrule
1.57$\times  10^{-1}$ & 2.7076$\times 10^{-4}$  \\
7.85$\times  10^{-2}$ & 1.6000$\times 10^{-5}$ (4.08) \\
3.93$\times  10^{-2}$ & 9.7412$\times 10^{-7}$ (4.04) \\
1.96$\times  10^{-2}$ & 6.0183$\times 10^{-8}$ (4.02) \\
9.81$\times  10^{-3}$ & 3.7426$\times 10^{-9}$  (4.01) \\
\bottomrule
\end{tabular}
\caption{$L^2$ errors (convergence rates) of the SBP-GP method for smooth $\mu$.}
\label{tab3}
\end{table}

Similar to the case with piecewise constant material property, the standard SBP-SAT method only
converges to third order accuracy, see the second column of Table \ref{tab4} labeled as SAT3. We
have tested the SBP-SAT method with four penalty terms, or with a sixth order interpolation and
restriction operator. Both methods lead to a fourth order convergence rate, see the third and fourth
column in Table \ref{tab4}.  However, the $L^2$ error is more than three times as large as the $L^2$ error
of the SBP-GP method with the same mesh size. 
\begin{table}
\centering
\begin{tabular}{llll}
\toprule
$2h$ & $L^2$ error (rate) SAT3 & $L^2$ error (rate) SAT4 & $L^2$ error (rate) INT6 \\ 
\midrule
1.57$\times  10^{-1}$ & 3.8636$\times  10^{-3}$           & 1.8502$\times 10^{-3}$ & 1.8503$\times  10^{-3}$  \\
7.85$\times  10^{-2}$ & 4.3496$\times 10^{-4}$ (3.15) & 9.4729$\times 10^{-5}$ (4.29)  & 9.4736$\times 10^{-5}$ (4.29) \\
3.93$\times  10^{-2}$ &  5.3152$\times 10^{-5}$ (3.03) & 3.7040$\times 10^{-6}$ (4.68) & 3.7043$\times 10^{-6}$ (4.68)\\
1.96$\times  10^{-2}$ &  6.6271$\times 10^{-6}$ (3.00) & 2.0778$\times 10^{-7}$ (4.16) & 2.0779$\times 10^{-7}$ (4.16)\\
9.81$\times  10^{-3}$ & 8.2783$\times 10^{-7}$ (3.00) & 1.3372$\times 10^{-8}$ (3.96) & 1.3372$\times 10^{-8}$ (3.96) \\
\bottomrule
\end{tabular}
\caption{$L^2$ errors (convergence rates) of the fourth order SBP-SAT method for smooth $\mu$.}
\label{tab4}
\end{table}

\section{Conclusion}\label{sec_conc}

We have analyzed two different types of SBP finite difference operators for solving the wave
equation with variable coefficients: operators with ghost points, $\widetilde G(\mu)$, and operators without
ghost points, $G(\mu)$. The close relation between the two operators has been analyzed
and we have presented a way of adding or removing the ghost point dependence in the
operators. Traditionally, the two operators have been used within different approaches for imposing
the boundary conditions. Based on their relation, we have in this paper devised a scheme that
combines both operators for satisfying the interface conditions at a non-conforming grid refinement
interface.

We first used the SBP operator with ghost points to derive a fourth order accurate SBP-GP
method for the wave equation with a grid refinement interface. This method uses ghost points from
both sides of the refinement interface to enforce the interface conditions. Accuracy and stability
of the method are ensured by using a fourth order accurate interpolation stencil and a compatible
restriction stencil. Secondly, we presented an improved method, where only ghost points from the
coarse side are used to impose the interface conditions. This is achieved by combining the operator
$G(\mu)$ in the fine grid and the operator $\widetilde G(\mu)$ in the coarse grid. Compared to the 
first SBP-GP method, the improved method leads to a smaller system of linear equations for the ghost
points with better conditioning. In addition, we have made improvements to the traditional fourth order SBP-SAT method, which
only exhibits a third order convergence rate for the wave equation with a grid refinement
interface. Two remedies have been presented and both result in a fourth order convergence rate.

We have conducted numerical experiments to verify that the proposed methods converge with fourth
order accuracy, for both smooth and discontinuous material properties. With a discontinuous
material, the domain is partitioned into subdomains such that discontinuities are aligned with
subdomain boundaries. We have also found numerically that the proposed SBP-GP method is stable under
a CFL time-step condition that is very close to the von Neumann limit for the corresponding periodic
problem. Being able to use a large time step is essential for solving practical large-scale wave
propagation problems, because the computational complexity grows linearly with the number of time
steps. We have found that the SBP-SAT method requires a smaller time step for stability, and that the
time step depends on the penalty parameters of the interface coupling conditions. In the case of
smooth material properties, the SBP-SAT method was also found to yield to a larger solution 
error compared to the SBP-GP method, for the same grid sizes and time step.

One disadvantage of the SBP-GP method is that a system of linear equations must be solved to obtain
the numerical solutions at the ghost points. However, previous work has demonstrated that the system
can be solved very efficiently by an iterative method
\cite{Petersson2015,Petersson2018}. Furthermore, the proposed method only uses ghost points on one
side of the interface and therefore leads to a linear system with fewer unknowns and a more regular
structure than previously. 

Sixth order accurate SBP operators can be used in the proposed method in a straightforward
way. However, sixth order SBP discretization often leads to a convergence rate lower than six, and it
is an open question if a six order discretization is more efficient than a fourth order
discretization for realistic problems. In future work we plan to extend the proposed method to the
elastic wave equation in three space dimensions with realistic topography based on
\cite{Petersson2015}, and implement it on a distributed memory machine to evaluate its efficiency. 



\section*{Acknowledgments}
S.~Wang would like to thank Professor Gunilla Kreiss at Uppsala University for the support
of this project. Part of the work was conducted when S.~Wang was on a research visit at Lawrence
Livermore National Laboratory. The authors thank B.~Sj\"ogreen for sharing his unpublished work on the
SBP-GP method with ghost points on both sides of the grid refinement interface. This work was
performed under the auspices of the U.S.~Department of Energy by Lawrence Livermore National
Laboratory under contract DE-AC52-07NA27344. This is contribution LLNL-JRNL-757334.

\section*{Appendix 1: Proof of Lemma \ref{lemmaQ}}
By using the standard fourth order finite difference stencil, \eqref{1dwave_ne} can be approximated as 
\[
\frac{d^2 u_j}{dt^2}= \left(-\frac{1}{12}u_{j+2}+\frac{4}{3}u_{j+1}-\frac{5}{2}u_j+\frac{4}{3}u_{j-1}-\frac{1}{12}u_{j-2}\right)\frac{\mu}{\rho}.
\]
By using the ansatz $u_j=\hat u e^{i\omega x_j}$, where $\omega$ is the wave number and $x_j=jh$, we obtain 
\begin{align*}
\frac{d^2\hat u}{dt^2} &=  \left(-\frac{1}{12}e^{i\omega 2h}+\frac{4}{3}e^{i\omega h}-\frac{5}{2}+\frac{4}{3}e^{-i\omega h}-\frac{1}{12}e^{-i\omega 2h}\right)\frac{\mu}{\rho}\hat u\\
&= -\frac{4}{h^2}\sin^2\frac{\omega h}{2}\left(1+\frac{1}{3}\sin^2\frac{\omega h}{2}\right)\frac{\mu}{\rho}\hat u.
\end{align*}
Therefore, the Fourier transform of the fourth order accurate central finite difference stencil is 
\begin{equation}\label{symbol}
\hat Q = -\frac{4}{h^2}\sin^2\frac{\omega h}{2}\left(1+\frac{1}{3}\sin^2\frac{\omega h}{2}\right)\frac{\mu}{\rho}.
\end{equation}
Consequently, we have 
\[
\kappa=\max |\hat Q|=\frac{16\mu}{3h^2\rho}.
\]
\bibliography{Siyang_References}
\bibliographystyle{siamplain}

\end{document}